\documentclass[12pt]{amsart}
\usepackage{lipsum}
\usepackage{amssymb}
\usepackage{enumerate}
\usepackage{amsthm}
\usepackage{tikz-cd}
\usepackage{amsmath}
\usepackage[mathscr]{euscript}
\usepackage[utf8]{inputenc}
\usepackage[english]{babel}
\usepackage{hyperref}
\hypersetup{
    colorlinks=true,
    linkcolor=blue,
    citecolor=red,
    filecolor=red,
    urlcolor=violet,
}

\makeatletter
\@namedef{subjclassname@2020}{%
  \textup{2020} Mathematics Subject Classification}
\makeatother
\newtheorem{thm}{Theorem}[section]

\newtheorem{prop}[thm]{Proposition}
\newtheorem{corl}[thm]{Corollary}

\theoremstyle{definition}
\newtheorem{defi}[thm]{Definition}

\newtheorem{xrem}{Remark}

\DeclareMathOperator{\rank}{{rank}}

\DeclareMathOperator{\HN}{{HN}}
\DeclareMathOperator{\gr}{{gr}}

\begin{document}
\baselineskip=16pt

\subjclass[2010]{ Primary  14F06, 14H45,14J29, 14J60}
\keywords{Higgs bundles, Higgs ampleness, Barton-Kleimann criterion}
\author{Indranil Biswas}
\author{Snehajit Misra}
\author{Nabanita Ray}
\address{Department of Mathematics, Shiv Nadar University, NH91, Tehsil Dadri, Greater Noida,
Uttar Pradesh 201314, India}
\email[Indranil Biswas]{indranil.biswas@snu.edu.in}
\address{Department of Mathematics and Computing, Indian Institute of Technology (Indian School of Mines) Dhanbad, Jharkhand - 826004, India}
\email[Snehajit Misra]{snehajitm@iitism.ac.in}
\address{Indraprastha Institute of Information Technology, Delhi, Okhla Industrial Estate, Phase III,
New Delhi, Delhi 110020, India}
\email[Nabanita Ray]{nabanita@iiitd.ac.in}

\address{}
\email[]{}

\begin{abstract}
In \cite{BCO25}, Bruzzo, Capasso and Otero extended
the notion of ampleness of vector bundles to the more
general context of Higgs bundles. But the ampleness of Higgs bundles did not
coincide with the ampleness of vector bundles when the Higgs field is zero. We modify
the definition of ample Higgs bundles that results in removal of this discrepancy.
Invoking this definition, we study various properties Higgs ample vector bundles. In particular, we prove a Barton-Kleimann type criterion to characterize the Higgs ample vector bundles.
\end{abstract}

\title{ Positivity of Higgs Vector Bundles}
\maketitle

\vskip 4mm

\section{Introduction}
Over the last few decades, a number of notions of positivity of line bundles have been introduced to understand the geometry of the base variety. The notions of ampleness  and nefness of a line bundle play an important role in this understanding. The notion of ampleness of a line bundle was extended to higher rank vector bundles by Hartshorne in \cite{H66}. Later, the notion of nefness for higher rank vector bundles was introduced by Campana and Peternell in  \cite{CP91}. The so called Barton-Kleimann criteria were given in \cite{L2} to characterize ample vector bundles and nef vector bundles.

Subsequently, the notion of ampleness and nefness were defined in the set up of Higgs vector bundles by Bruzzo, Capasso and Otero
in \cite{BBG19} and \cite{BCO25}. It turns out that the notion of ampleness as introduced in \cite{BCO25} does not coincide with the usual notion of ampleness for ordinary vector bundles when the Higgs field is set to be zero. In view of this, we have introduced a modified notion of Higgs ampleness that actually generalizes the notion of ampleness for ordinary vector bundles when the Higgs field is zero.

Our definition of Higgs ampleness is weaker in the sense that every Higgs bundle which is Higgs ample in the sense of \cite{BCO25} is Higgs ample according to our definition. Moreover, it can happen that a Higgs bundle which is Higgs ample according to our definition is not Higgs ample according to \cite{BCO25}. Using the new definition, we have proved various properties of Higgs ample vector bundles. In particular, we proved a Barton-Kleimann type criterion to characterize the Higgs ample bundles. More specifically, we prove the following
(see Corollary \ref{corl3.4}):

\begin{thm}[{Barton-Kleimann type criterion for Higgs ampleness}] Let $X$ be a smooth projective variety and let $h \in \text{NS}(X)_{\mathbb{R}}$ be a fixed ample class on $X$. Then a Higgs vector bundle $\mathcal{E}=(E,\theta)$ is Higgs ample if and only if the  following two conditions holds :

    \vspace{2mm}

    \begin{itemize}
        \item The line bundle $\det(E)$ is ample.
        \item  There exists a $\delta\in \mathbb{R}_{>0}$ such that for every morphism $f:C\longrightarrow X$ from a smooth projective curve $C$, and for every Higgs quotient $\mathcal{Q}$ of $f^*\mathcal{E},$ the inequality
    $$\deg(\mathcal{Q})\geq \delta\bigl(C\cdot f^*h \bigr)$$ holds.
    \end{itemize}
    \end{thm}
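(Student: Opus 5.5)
We have to guess the definition of Higgs ampleness. It presumably uses the Higgs Grassmannians $\mathfrak{Gr}_s(\mathcal{E})\subset \mathrm{Gr}_s(E)$ parametrizing rank $s$ quotients $E_x\to Q$ whose kernel is $\theta$-invariant. We then take the restrictions $\lambda_s$ of the Plücker classes $\mathcal{O}(1)$ (that is, $\det$ of the universal quotient). The modified definition says that $\mathcal{E}$ is Higgs ample when $\lambda_s$ is ample on $\mathfrak{Gr}_s(\mathcal{E})$ for $1\le s<r$ and $\det E$ is ample. Here $\det E$ corresponds to the rank $r$ quotient, which is the condition dropped or altered in \cite{BCO25}. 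I will assume this and, as the paper's earlier results, the fact that Higgs quotients of $f^*\mathcal{E}$ of rank $s$ correspond to lifts $C\to\mathfrak{Gr}_s(\mathcal{E})$ of $f$. Under this correspondence $\deg$ of the quotient equals $\deg$ of the pullback of $\lambda_s$.

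The strategy is to reduce both directions to the Seshadri/Barton–Kleiman criterion for ampleness of a line bundle on the projective schemes $\mathfrak{Gr}_s(\mathcal{E})$. A line bundle $L$ on a projective scheme $Y$ is ample iff there is $\epsilon>0$ with $\deg g^*L\ge \epsilon\,\deg g^*A$ for every map $g:C\to Y$ from a smooth curve, where $A$ is a fixed ample class on $Y$. Seshadri's criterion applies to curves in $Y$; passing to normalizations handles arbitrary maps from smooth curves, since degrees scale with the degree of the map.

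\textbf{(Sufficiency).} Assume both conditions hold. Fix $s<r$ and set $Y=\mathfrak{Gr}_s(\mathcal{E})$ with projection $\pi:Y\to X$. Take $A=\pi^*h+\lambda_s'$, where $\lambda_s'$ is a $\pi$-relatively ample class. The natural choice is the Plücker class $\lambda_s$ twisted by a large multiple of $\pi^*h$, so that $A=\lambda_s+m\pi^*h$ is ample on $Y$. For a curve $g:C\to Y$ with $f=\pi\circ g$ there are two cases.
\begin{itemize}
\item If $f$ is constant, then $g$ lands in a fibre, which is a closed subscheme of an ordinary Grassmannian. There $\lambda_s$ is ample, giving $\deg g^*\lambda_s>0$ with a uniform bound relative to $\deg g^*A=\deg g^*\lambda_s$.
\item If $f$ is nonconstant, the hypothesis gives $\deg g^*\lambda_s=\deg Q\ge \delta\,(C\cdot f^*h)$. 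Then $\deg g^*A=\deg g^*\lambda_s+m(C\cdot f^*h)\le (1+m/\delta)\deg g^*\lambda_s$.
\end{itemize}
Taking $\epsilon=\min(1,\delta/(\delta+m))$ in both cases, Seshadri's criterion gives $\lambda_s$ ample on $Y$, and $\det E$ is ample by hypothesis.

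\textbf{(Necessity).} Assume $\mathcal{E}$ is Higgs ample. Each $\lambda_s$ is ample on the projective scheme $\mathfrak{Gr}_s(\mathcal{E})$, so for some $\epsilon_s>0$ we have $\deg g^*\lambda_s\ge\epsilon_s\deg g^*\pi^*h$ for all curves. This uses that $\lambda_s-\epsilon_s\pi^*h$ is still ample for small $\epsilon_s$, since $\pi^*h$ is nef. The rank $r$ quotient, namely $f^*\mathcal{E}$ itself, is handled by the ampleness of $\det E$ in the same way. A Higgs quotient $\mathcal{Q}$ of $f^*\mathcal{E}$ of rank $s$ is locally free after saturation on a smooth curve and corresponds to a lift $g$. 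Then $\deg\mathcal{Q}\ge\epsilon_s(C\cdot f^*h)$, and $\delta=\min_s\epsilon_s$ works. If quotients with torsion or rank $0$ are allowed, their degree is $\ge 0$; this is fine only if the definition excludes them, so I would check the paper's convention.

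The main obstacle is making the curve–lift correspondence precise at points where the kernel fails to be a subbundle. I would handle this by replacing $\mathcal{Q}$ by its torsion-free quotient, which only lowers the degree and keeps the kernel $\theta$-invariant. A second issue is ensuring that the Seshadri constant is uniform over the scheme-theoretic (possibly nonreduced, reducible) $\mathfrak{Gr}_s$. Ampleness can be checked on the reduction, so I would work there.
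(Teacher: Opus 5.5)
There is one genuine gap, and it comes from your guess of the definition. Definition~\ref{defi2} requires $\mathcal{E}$ to be H-nef \emph{in addition} to ampleness of $\mathcal{O}_{\mathcal{G}r_k(\mathcal{E})}(1)$ for $1\le k\le r-1$ and of $\det(E)$. The change from \cite{BCO25} is not about $\det E$. It replaces the recursive H-ampleness of the universal quotients $\mathcal{Q}_{\mathcal{E},k}$ by ampleness of their determinants, and keeps H-nefness as a separate clause.

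Your sufficiency argument only produces ampleness of each $\lambda_s$ (the paper's $\xi_s=\mathcal{O}_{\mathcal{G}r_s(\mathcal{E})}(1)$) and of $\det E$. H-nefness does not follow from anything you wrote. It is a recursive condition on the bundles $\mathcal{Q}_{\mathcal{E},k}$ over $\mathcal{G}r_k(\mathcal{E})$ and on their own Higgs Grassmannians, which your Seshadri argument never looks at. The repair is one step. Hypothesis (2) gives $\deg\mathcal{Q}\ge 0$ for every Higgs quotient of every $f^*\mathcal{E}$, i.e.\ $\mu^H_{\min}(f^*\mathcal{E})\ge 0$, and the curve criterion \cite[Lemma 3.3]{BBG19} then shows that $\mathcal{E}$ is H-nef. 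This is exactly how the paper's proof of Theorem~\ref{thm3.3} begins. Alternatively, induct on the rank, using that a Higgs quotient of the pullback of $\mathcal{Q}_{\mathcal{E},k}$ to a curve is a Higgs quotient of the pullback of $\mathcal{E}$.

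With this added, the rest of your proof is correct. Your sufficiency argument is the paper's in different packaging. With $A=\lambda_s+m\pi^*h$ and $\epsilon=\delta/(\delta+m)$ one has $\lambda_s-\epsilon A=\frac{m}{\delta+m}\bigl(\lambda_s-\delta\pi^*h\bigr)$. So you are proving nefness of $\xi_s-\delta\rho_s^*h$ by the same split into curves in fibres and curves not in fibres, and then adding an ample class, as the paper does. The only difference is that you use hypothesis (2) directly instead of converting it into a bound on $\mu^H_{\min}$.

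Your necessity argument is genuinely more direct. You use openness of the ample cone for each $\lambda_s$ (and for $\det E$, which covers the rank-$r$ quotient), evaluate on the lift of the given quotient, and take $\delta=\min_s\epsilon_s$. The paper instead goes through Theorem~\ref{thm3.3}. It bounds $\mu^H_{\min}(f^*\mathcal{E})$ via the Harder--Narasimhan filtration: the semistable case through $\det E$, the unstable case through the lift of the minimal-slope quotient. It then uses $\deg\mathcal{Q}=k\,\mu(\mathcal{Q})\ge k\,\mu^H_{\min}(f^*\mathcal{E})$. Your route avoids Harder--Narasimhan theory altogether and makes the uniformity of $\delta$ in $s$ explicit, which the paper leaves implicit.

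Two small points. First, $\lambda_s-\epsilon_s\pi^*h$ remains ample because the ample cone is open, not because $\pi^*h$ is nef. Second, you are right that the statement must be read for nonzero quotients. Positive-rank quotients with torsion reduce to their locally free part as you describe. Rank-zero quotients (e.g.\ a length-one skyscraper) violate the inequality once $C\cdot f^*h$ is large.
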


Our proofs are inspired by the results in \cite{L1} and \cite{L2}. 

\section{Preliminaries}

Throughout this article, the base field  $\mathbb{K}$ is an algebraically closed field of characteristics 0. Let $X$ be a smooth projective variety over $\mathbb{K}$. A Higgs sheaf on $X$ is a pair $\mathcal{E}\,=\, (E,\,\theta)$, where $E$ is a coherent sheaf on $X$ and $\theta \,:\, E\,\longrightarrow \,E\otimes \Omega^1_X$ is an $\mathcal{O}_X$-module morphism such that the composition of maps
$$\theta \wedge \theta\,:\, E \, \xrightarrow{\,\,\theta\,\,}\, E \otimes \Omega^1_X\,\xrightarrow{\,\,\theta\otimes id\,\,}\, E\otimes \Omega^1_X \otimes \Omega_X^1\, \longrightarrow\, E\otimes \Omega^2_X $$
is the zero homomorphism. A Higgs sheaf $\mathcal{E}\, =\, (E,\,\theta)$ is called Higgs bundle if $E$ is locally free. For a smoth map $\phi : Y \longrightarrow X$ between two smooth projective varieties $X$ and $Y$, and a Higgs bundle $\mathcal{E}\,=\,(E,\,\theta)$ on $X$, its pullback $\phi^*\mathcal{E}$ under $\phi$ is defined to be the Higgs bundle
 $(\phi^*E,\,\phi^*\theta)$, where $\phi^*\theta\, :\, \phi^*E\, \longrightarrow\, \phi^*E\otimes \Omega^1_Y$ is the composition of maps
 \begin{align*}
\phi^*E\ \longrightarrow\ \phi^*E \otimes \phi^*\Omega^1_X\ \longrightarrow\ \phi^*E\otimes \Omega^1_Y.
 \end{align*}
A morphism of Higgs sheaves $f\,:\,{\mathcal E}\, :=\, (E,\,\theta)\,\longrightarrow\,
{\mathcal G}\, :=\, (G,\,\phi)$ is a morphism of $\mathcal{O}_X$-modules $f\,:\,E\,\longrightarrow\, G$ such that the following diagram is commutative:
\begin{center}
 \begin{tikzcd}
E \arrow[r, "f"] \arrow[d, "\theta"]
& G \arrow[d, "\phi" ] \\
E \otimes\Omega^1_X \arrow[r, "f\otimes id" ]
& G\otimes\Omega^1_X
\end{tikzcd}
\end{center}

The Higgs sheaves $\mathcal{E}$ and $\mathcal{G}$ are said to be isomorphic if there is a morphism $f \,:\, \mathcal{E} \,\longrightarrow\, \mathcal{G}$ such that $f\, :\, E\, \longrightarrow\, G$ is an isomorphism of $\mathcal{O}_X$-modules.

\subsection{Higgs Grassmann scheme}

Take a Higgs bundle $\mathcal{E} \,=\, (E,\,\theta)$ on $X$ of rank $r$, and also take an integer $1\leq k \leq r-1$. Let $p_k: \text{Gr}_k(E) \longrightarrow X$ be the Grassmann bundle parametrizing locally free quotients of $E$ of rank $k$. We consider the following exact sequence:
\begin{align}\label{seq1}
 0\, \longrightarrow\, S_{E,r-k}\, \xrightarrow{\,\,\,\psi\,\,\,}\, p_k^*E \, \xrightarrow{\,\,\,\eta\,\,\,}\, Q_{E,k}\,\longrightarrow\, 0,
\end{align}
where  $S_{E,r-k}$ is the universal rank $r-k$ subbundle of $p_k^*E$ and $Q_{E,k}$ is the universal quotient of rank $k$. Corresponding to a Higgs bundle $\mathcal{E}\,=\,(E,\,\theta)$, the closed subscheme $\mathcal{G}r_k(\mathcal{E}) \subseteq \text{Gr}_k(E)$ (called {\it Higgs Grassmann schemes}) is defined as the zero loci of the following composition of morphisms:
$$\bigl(\eta \circ \text{Id}_{\Omega^1_X}\bigr) \circ p_k^*(\theta)\circ \psi \ :\ S_{E,r-k}\ \longrightarrow\ Q_{E,k} \otimes p_k^*\Omega^1_X.$$
Restricting the exact sequence in \eqref{seq1} to the Higgs Grassmann scheme $\mathcal{G}r_k(\mathcal{E})$, we have the following universal short exact sequence
\begin{align}\label{seq2}
 0\ \longrightarrow\ \mathcal{S}_{\mathcal{E},r-k}\ \xrightarrow{\,\,\,\psi\,\,\,}\ \rho_k^*\mathcal{E}\ \xrightarrow{\,\,\,\eta\,\,\,}\
 \mathcal{Q}_{\mathcal{E},k}\ \longrightarrow\ 0.
\end{align}
Here $\rho_k \,=\, p_k\vert_{\mathcal{G}r_k(\mathcal{E})}$, and the Higgs vector bundle $\mathcal{Q}_{\mathcal{E},k} \,=\, Q_{E,k}\vert_{\mathcal{G}r_k(\mathcal{E})}$ is equipped with the quotient Higgs field induced by $\rho_k^*\theta.$ The Higgs Grassmann scheme 
$\mathcal{G}r_k(\mathcal{E})$ has the following universal properties :
\vspace{2mm}

    For any morphism $f\,:\,Y\,\longrightarrow\, X$ and any locally free Higgs quotient $\mathcal{F}$ of $f^*\mathcal{E}$ of rank $k$, there is  a morphism $\psi_k\,:\,Y\,\longrightarrow\, \mathcal{G}r_k(\mathcal{E})$ over $X$ such that $\mathcal{F} \,=\, \psi_k^*\mathcal{Q}_{\mathcal{E},k}.$
    Conversely, if such a map $\psi_k\,:\,Y\,\longrightarrow\, \mathcal{G}r_k(\mathcal{E})$ exists, then there is a map $f \,:\, Y\,\longrightarrow \,X$ such that $\mathcal{F}\, :=\, \psi_k^*\mathcal{Q}_{\mathcal{E},k}$ a locally free Higgs quotient of $f^*\mathcal{E}.$ 
\vspace{3mm}

For a morphism of varieties $f\,:\,Y\,\longrightarrow\, X,$ the morphism $\psi_k\,:\, Y \,\longrightarrow \,\text{Gr}_k(E)$ factors through the Higgs Grassmann scheme $\mathcal{G}r_k(\mathcal{E})$ if and only if $\theta$ induces a Higgs field on  the universal quotient bundle $\text{Q}_{E,k}$
over $\text{Gr}_{k}(E)$.
\vspace{2mm}

For each $1\,\leq\, k \,\leq\, r-1,$ we denote the line bundle $\mathcal{O}_{\text{Gr}_k(E)}(1)\vert_{\mathcal{G}r_k(\mathcal{E})}$ by $\mathcal{O}_{\mathcal{G}r(\mathcal{E})}(1).$

\subsection{Semi-stability of Higgs vector bundles}

Take a Higgs vector bundle $\mathcal{E} \,=\, (E,\,\theta)$ on a smooth projective curve $C$. Its slope $\mu(\mathcal{E})$ is defined as $$\mu(\mathcal{E})\ :=\ \mu(E) \ =\ \dfrac{\deg(E)}{\rank(E)}.$$

\begin{defi}\label{defn1}
 A  Higgs bundle $\mathcal{E} \,=\, (E,\,\theta)$ on a smooth projective curve $C$ is said to be {\it slope semistable} (respectively, {\it slope stable}) if  for every $\theta$-invariant proper subbundle $G$ of $E$ (i.e., $\theta(G) \,\subset\, G\otimes \Omega_C^1)$, one has
 \begin{center}
  $\mu(G)\ \leq\ \mu(E)$\ \ \, (respectively, $\mu(G)\ <\ \mu(E))$.
 \end{center}

\end{defi}  
  In fact,  in the Definition \ref{defn1} of semistability (respectively, stability) for a Higgs bundle $\mathcal{E}\,=\,(E,\,\theta)$, it is enough to consider $\theta$-invariant proper subbundles $G$ of $E$ for which the quotient $E/G$ is torsion-free (and hence locally free on the smooth curve $C$). 

  For a  Higgs bundle $\mathcal{E}\, =\, (E,\,\theta)$ on a smooth projective curve $C$, there is a unique filtration of  $\theta$-invariant subbundles
 \begin{align*}
  0 \,=\, \HN_0(\mathcal{E}) \,\subsetneq\, \HN_1(\mathcal{E}) \,\subsetneq\, \HN_2(\mathcal{E}) \,\subsetneq \,\cdots\,\subsetneq \,\HN_{l-1}(\mathcal{E})\, \subsetneq\, \HN_l(\mathcal{E}) \,=\, \mathcal{E}
 \end{align*}
such that each quotient Higgs bundle $\gr^{\HN}_i(\mathcal{E}) \,=\, \bigl((\HN_i(\mathcal{E})/\HN_{i-1}(\mathcal{E})),\overline{\theta}\vert_{\HN_{i-1}(\mathcal{E})}\bigr)$ is slope semistable with
$\mu(\gr^{\HN}_i(\mathcal{E})) \,>\, \mu(\gr^{\HN}_{i+1}(\mathcal{E}))$ for all $i\,=\,1,\,\cdots,\,l-1$. Such a filtration is called the Harder-Narasimhan filtration of $\mathcal{E}$.
We define $$\mu_{\max}(\mathcal{E}) \ :=\ \mu(\gr^{\HN}_1(\mathcal{E}))\hspace{3mm} \text{ and } \hspace{2mm} \mu_{\min}(\mathcal{E})\ :=\ \mu(\gr^{\HN}_l(\mathcal{E})).$$
Note that $$\mu_{\max}(\mathcal{E})\ \geq\ \mu(\mathcal{E})\ \geq\ \mu_{\min}(\mathcal{E})$$ and any equality holds if and only if $\mathcal{E}$ is slope semistable. Note that $\mu_{\min}(\mathcal{E})$ is the minimum value among all the slopes of all the non-zero Higgs quotient of $\mathcal{E}$.

\begin{defi}
\rm A Higgs bundle $\mathcal{E}$ is curve semistable if, for every morphism $f\,:\,C\,\longrightarrow\, X$ where $C$ is a smooth irreducible projective curve, the pullback bundle $f^*\mathcal{E}$ is semistable.
\end{defi}

\subsection{Higgs nef and Higgs ample bundles}

An ordinary vector bundle $E$ on a  projective scheme  $X$  is called ample (respectively, nef) if the tautological line bundle $\mathcal{O}_{\mathbb{P}(E)}(1)$ is ample (respectively, nef).
We now recall the defintions of Higgs nef and Higgs ample bundles.

\begin{defi}[{\cite{BCO25}}]\label{defi-nef}
Let $\mathcal{E}\, =\, (E,\,\theta)$ be a Higgs vector bundle of rank $r$ on a smooth  projective variety $X$. If $\rank(E) \,=\, 1$, then $\mathcal{E}$ is called {\it Higgs nef} ($\text{H}$-nef in short) if $E$ is nef as a line bundle in the usual sense. If $\rank(E) \,\geq \,2$, then the {\it Higgs nefness} ($\text{H}$-nefness in short) of $\mathcal{E}$ is defined inductively as follows:
\begin{enumerate}
    \item For all $k$ with $1\,\leq\, k \,\leq\, r-1$, the universal Higgs quotient bundles $\mathcal{Q}_{\mathcal{E},k}$ on Higgs Grassmann schemes $\mathcal{G}r_k(\mathcal{E})$ are Higgs nef.
    
    \item The line bundle $\det(E)$ is nef in the usual sense.
\end{enumerate}
\end{defi}

Note that if $\mathcal{E}$ is Higgs nef, then $\mathcal{O}_{\mathcal{G}r_k(\mathcal{E})}(1) \,=\, \det(\mathcal{Q}_{\mathcal{E},k})$ is nef in the usual sense for each $1\,\leq\, k \,\leq\, r-1$.

\begin{defi}[{\cite{BCO25}}]\label{defi1} 
Let $\mathcal{E} \,=\, (E,\,\theta)$ be a Higgs vector bundle of rank $r$ on a smooth  projective variety $X$. If $\rank(E) \,=\, 1$, then $\mathcal{E}$ is {\it Higgs ample} ($\text{H}$-ample in short) if $E$ is ample as a line bundle in the usual sense. If $\rank(E) \,\geq \,2$, then the Higgs ampleness ($\text{H}$-ampleness in short) of $\mathcal{E}$ is defined inductively as follows:
\begin{enumerate}
    \item For all $k$ with $1\,\leq\, k \,\leq\, r-1$, the universal Higgs quotient bundles $\mathcal{Q}_{\mathcal{E},k}$ on Higgs Grassmann schemes $\mathcal{G}r_k(\mathcal{E})$ are Higgs ample.
    
    \item The line bundle $\det(E)$ is ample in the usual sense.
\end{enumerate}
\end{defi}

\begin{xrem}\label{re1}
Definition \ref{defi1}  does not generalize the usual notion of ampleness of ordinary vector bundles  when the Higgs field $\theta$ is zero. According to Definition \ref{defi1}, if the Higgs field $\theta$ is zero, then \text{H}-ampleness of the Higgs bundle $\mathcal{E}\,=\,(E, \,\theta)$ implies that the universal quotient bundles $Q_{E,k}$ are ample in the usual sense. This is not the case as universal quotient bundles of a Grassmann variety is nef but not  ample; see \cite[Example 6.1.6]{L1}.  
\end{xrem}

To address the discrepancy mentioned in Remark \ref{re1}, we have introduced a new notion of Higgs ampleness.

\begin{defi}\label{defi2}
Let $\mathcal{E} \,=\, (E,\,\theta)$ be a Higgs vector bundle of rank $r$ on a smooth  projective variety $X$. We say that $\mathcal{E}$ is {\it Higgs ample} ($\text{H}$-ample in short) if it is H-nef and the following two conditions are satisfied :
    \begin{enumerate}
        \item For all $k$ with $1\,\leq\, k \,\leq\, r-1$, the line bundles $\mathcal{O}_{\mathcal{G}r_k(\mathcal{E})} (1)$  on Higgs Grassmann schemes $\mathcal{G}r_k(\mathcal{E})$ are ample in the usual sense.
        
        \item The line bundle $\det(E)$ is ample in usual sense.
    \end{enumerate}
\end{defi}

\begin{xrem}
If $E$ is an ample vector bundle, then the Higgs vector bundle $\mathcal{E}\,=\,(E,\,\theta)$ is Higgs ample for any Higgs field $\theta$ according to the Definition \ref{defi2}.  Indeed, if $E$ is ample, then for each $1\,\leq\, k \,\leq\, \rank(E)$, the exterior power $\bigwedge^kE$ are also ample. So for the Pl\"ucker embedding $\omega \,:\, \text{Gr}_k(E)\, \hookrightarrow \,\mathbb{P}(\wedge^kE)$, we have $\mathcal{O}_{\text{Gr}_k(E)}(1)\, =\, \omega^*\mathcal{O}_{\mathbb{P}(\bigwedge^k(E))}(1)$ to be ample. Thus for any Higgs field $\theta$ on $E$, the line bundle $\mathcal{O}_{\mathcal{G}r_k(\mathcal{E})}(1) \,=\, \mathcal{O}_{\text{Gr}_k(E)}(1)\vert_{\mathcal{G}r_k(\mathcal{E})}$  is ample. This implies that $\mathcal{E}\,=\,(E,\,\theta)$ is Higgs ample.
    \end{xrem}

\begin{xrem}
According to Definition \ref{defi2}, when the Higgs field $\theta$ is 0, the Higgs bundle $\mathcal{E} \,=\, (E,\,0)$ is Higgs ample if and only if $E$ is ample as a vector bundle in the usual sense. Indeed, as $\mathcal{G}r_k(\mathcal{E}) \,=\, \text{Gr}_k(E)$ for every $k$ when the Higgs field $\theta \,=\, 0.$ Also, in this case, $\mathcal{O}_{\mathcal{G}r_k(\mathcal{E})}(1)\, =\, \mathcal{O}_{\text{Gr}_k(E)}(1)$ for every $1\,\leq\, k \,\leq\, r-1$. Thus Definition \ref{defi2} is a natural generalization of the usual ampleness
of vector bundles.
\end{xrem}

\begin{xrem}
If a Higgs vector bundle $\mathcal{E}$ is $\text{H}$-ample (in the sense of \cite{BCO25}) as in Definition \ref{defi1}, then $\mathcal{E}$ is $\text{H}$-ample according to Definition \ref{defi2}. This is because  for any Higgs bundle $\mathcal{E}\,=\,(E,\,\theta),$ the determinant of its universal Higgs quotient satisfies the condition $\det(\mathcal{Q}_{\mathcal{E},k}) \,=\, \mathcal{O}_{\mathcal{G}r_k(\mathcal{E})}(1)$ for every $1\,\leq\, k\,\leq\, \rank(E) -1.$ However, the converse is not true, i.e., a Higgs vector bundle $\mathcal{E}\,=\,(E,\,\theta)$ may be Higgs ample according to Definition \ref{defi2} without being Higgs ample according to Definition \ref{defi1} (see Remark \ref{re1}).
\end{xrem}

\begin{xrem}
In the case of an ordinary vector bundle $E$, ampleness of $E$ is determined  by the ampleness of the universal rank-one quotient $\mathcal{O}_{\text{Gr}_1(E)}(1)\, =\, \mathcal{O}_{\mathbb{P}(E)}(1)$; equivalently, it suffices to check the ampleness of the tautological line bundle $\mathcal{O}_{\mathbb{P}(E)}(1)$ on the projective bundle $\mathbb{P}(E)$. In contrast to this, for Higgs bundles case, to check ampleness one must prove the ampleness of all the tautological line bundles $\mathcal{O}_{\mathcal{G}r_k(\mathcal{E})}(1)$ associated with the universal Higgs quotients $\mathcal{Q}_{\mathcal{E},k}$, of all ranks $1\,\leq \,k \, \leq\, r-1$.

Let $\mathcal{E} \,=\, (E,\,\theta)$ be a rank three nilpotent Higgs bundle on a smooth curve $C$, which is of the form $E\,=\,L_1\bigoplus L_2\bigoplus L_3$, where each $L_i$ is a line bundle and $\theta(L_1)\,\subset \,L_2\bigotimes \Omega^1_C$, $\theta(L_2)\,\subset\, L_3\bigotimes \Omega^1_C$ and $\theta(L_3)\,=\,0$.
Also let $\alpha_i\,=\,c_i(L_i)$, and let $F_k$ be the fibre of the map $\rho_k\,:\,\mathcal{G}r_k(\mathcal{E})\, \longrightarrow \,C$ and $c_1(\mathcal{Q}_{\mathcal{E},k})\,=\,\xi_k$. Assume that $\alpha_1,\alpha_2$ and $\alpha_3$ be integers such that $\alpha_1\,>\,0$, $\alpha_3\,<\,0$ and $\alpha_1+\alpha_2+\alpha_3\,>\,0$. 
The computations in Section 3.4 of \cite{BR06} show that $\mathcal{G}r_1(E)$ has two components not contained in the fibres which are $X_1$ and $X_2$ that are numerically equivalent to $2(\xi_1^2-(\alpha_2+\alpha_3)\xi_1\cdot F_1)$ and $\xi_1^2-(\alpha_2+\alpha_3)\xi_1\cdot F_1$ respectively. Now clearly $\deg(\xi_1\vert_{X_1})\,=\,2\alpha_1$
and $\deg(\xi_1\vert_{X_2})\,=\,\alpha_1$. Therefore, $\xi_1$ is ample on $\mathcal{G}r_1(E)$. Now $\mathcal{G}r_2(E)$ has one of the components $Y$ numerically equivalent to $\xi_2^2-(\alpha_1+\alpha_2)\xi_2\cdot F_2$. Also, $\deg(\xi_2\vert_Y)=\alpha_3 <0$ and hence $\xi_2\vert_Y$ is not ample. Hence $\xi_2$ is not ample.
\end{xrem}

\section{Main results}

Henceforth, we will use Definition \ref{defi2}. So a nef Higgs vector bundle $\mathcal{E} \,=\, (E,\, \theta)$ is Higgs ample if it satisfies the two conditions in Definition \ref{defi2}.

\begin{prop}\label{thm3.1}
 Let $\mathcal{E} \,= \,(E,\,\theta)$ be a Higgs vector bundle of rank $r$ on a smooth  projective variety $X$, and let $\phi \,:\, Y\,\longrightarrow\, X$ be a finite morphism of smooth projective varieties. Then the following two hold:
 \begin{enumerate}
     \item If $\mathcal{E}$ is Higgs ample on $X$, then its pullback $\phi^*\mathcal{E}$ on $Y$ is also Higgs ample.
     \item If $\phi$ is a finite surjective map and $\phi^*\mathcal{E}$ is Higgs ample, then $\mathcal{E}$ is also Higgs ample.
 \end{enumerate}
\end{prop}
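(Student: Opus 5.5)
The plan is to reduce both statements to the classical facts that ampleness and nefness of line bundles are preserved under finite pullback, and descend along finite surjective maps. The link is a base-change diagram between Higgs Grassmann schemes.

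\emph{Base change.} For each $1\le k\le r-1$ we have the fibre product $\text{Gr}_k(\phi^*E)=\text{Gr}_k(E)\times_X Y$. The induced map $\Phi_k:\text{Gr}_k(\phi^*E)\to \text{Gr}_k(E)$ is finite, being the base change of $\phi$. It is surjective whenever $\phi$ is, and it satisfies $\Phi_k^*\mathcal{O}_{\text{Gr}_k(E)}(1)=\mathcal{O}_{\text{Gr}_k(\phi^*E)}(1)$. We then compare $\mathcal{G}r_k(\phi^*\mathcal{E})$ with $\Phi_k^{-1}(\mathcal{G}r_k(\mathcal{E}))$. A point of the latter is a quotient of $E_{\phi(y)}$ whose kernel $S$ satisfies $(\eta\otimes\mathrm{id})\theta(S)=0$ in $Q\otimes\Omega^1_X$. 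Composing with $d\phi^*:\phi^*\Omega^1_X\to\Omega^1_Y$ then kills the image in $Q\otimes\Omega^1_Y$. Hence
$$\Phi_k^{-1}(\mathcal{G}r_k(\mathcal{E}))\subseteq \mathcal{G}r_k(\phi^*\mathcal{E}),$$
which also follows from the universal property recalled in Section 2. Conversely, the pullback Higgs field is $\phi^*\theta$ followed by $d\phi^*$. Where $d\phi^*$ is injective on fibres (over the dense open set where $\phi$ is étale, or on all of $Y$ if $\phi$ is unramified), the two zero loci agree. The main obstacle is that $\mathcal{G}r_k(\phi^*\mathcal{E})$ could in general be strictly larger than $\Phi_k^{-1}(\mathcal{G}r_k(\mathcal{E}))$ along the ramification locus.

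\emph{Part (1).} Take $\mathcal{E}$ Higgs ample. Then $\det(\phi^*E)=\phi^*\det E$ is ample because $\phi$ is finite. The restriction of $\Phi_k$ to $\Phi_k^{-1}(\mathcal{G}r_k(\mathcal{E}))$ is finite onto $\mathcal{G}r_k(\mathcal{E})$, so $\mathcal{O}(1)$ is ample on $\Phi_k^{-1}(\mathcal{G}r_k(\mathcal{E}))$. To handle the larger scheme $\mathcal{G}r_k(\phi^*\mathcal{E})$, I would first try to avoid any containment altogether and argue via curves and Seshadri-type bounds. Since $\mathcal{O}_{\text{Gr}_k(\phi^*E)}(1)$ restricted to $\mathcal{G}r_k(\phi^*\mathcal{E})$ is at least nef (H-nefness of pullbacks being the inductive statement of \cite{BCO25}), ampleness can be tested with Seshadri's criterion: find $\epsilon>0$ with $\deg(\mathcal{O}(1)|_{C})\ge\epsilon\, m_C$ for every curve $C$ in $\mathcal{G}r_k(\phi^*\mathcal{E})$.

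Such a curve $C$ corresponds to a curve $\nu:\tilde C\to Y$ together with a Higgs quotient of $\nu^*\phi^*\mathcal{E}=(\phi\nu)^*\mathcal{E}$. Hence $\phi\circ\nu$ is a curve mapping to $X$ carrying a Higgs quotient of the pullback of $\mathcal{E}$. By the universal property, this quotient is classified by a morphism $\tilde C\to\mathcal{G}r_k(\mathcal{E})$, and the degree of $\mathcal{O}(1)$ is the degree of the quotient. Ampleness of $\mathcal{O}_{\mathcal{G}r_k(\mathcal{E})}(1)$ then bounds this degree below by a multiple of $(\phi\nu)^*h$. Since $\phi$ is finite, $\phi^*h$ is ample on $Y$, so this dominates a multiple of the degree against an ample class on $\text{Gr}_k(\phi^*E)$. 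Indeed, such an ample class is $\mathcal{O}(1)+N\cdot(\text{pullback of ample})$, and we can combine it with $\mathcal{O}(1)$ itself. This curve argument sidesteps the ramification issue, because curves on $\mathcal{G}r_k(\phi^*\mathcal{E})$ only require the quotient to be a Higgs quotient for the pulled-back field on the curve, which is exactly what the universal property of $\mathcal{G}r_k(\mathcal{E})$ accepts for the morphism $\tilde C\to X$. H-nefness of $\phi^*\mathcal{E}$ follows by the same inductive argument, or from \cite{BCO25}.

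\emph{Part (2).} Take $\phi$ finite surjective. Then $\det E$ is ample because $\phi^*\det E$ is ample and ampleness descends under finite surjective maps. For each $k$, the map $\Phi_k:\Phi_k^{-1}(\mathcal{G}r_k(\mathcal{E}))\to\mathcal{G}r_k(\mathcal{E})$ is finite surjective, and its source is a closed subscheme of $\mathcal{G}r_k(\phi^*\mathcal{E})$ by the containment above. So $\mathcal{O}(1)$ is ample on the source, being the restriction of an ample bundle, and it is the pullback of $\mathcal{O}_{\mathcal{G}r_k(\mathcal{E})}(1)$. Descent of ampleness under finite surjections then gives the claim, and nefness descends in the same way. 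The step I expect to need the most care is making the curve-degree comparison in (1) uniform in the Seshadri constant; part (2) is formal once the containment is in place.
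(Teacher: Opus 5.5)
Your part (2) is fine and is essentially the paper's argument. All it needs is the containment $\Phi_k^{-1}(\mathcal{G}r_k(\mathcal{E}))\subseteq\mathcal{G}r_k(\phi^*\mathcal{E})$, restriction of an ample bundle, and descent of ampleness along the finite surjection $\Phi_k^{-1}(\mathcal{G}r_k(\mathcal{E}))\to\mathcal{G}r_k(\mathcal{E})$.

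Part (1), however, contains a circular step. In your curve argument, the sentence ``by the universal property, this quotient is classified by a morphism $\tilde C\to\mathcal{G}r_k(\mathcal{E})$'' says precisely that $C\subseteq\Phi_k^{-1}(\mathcal{G}r_k(\mathcal{E}))$. That is the very containment you said may fail. $\mathcal{G}r_k(\mathcal{E})$ is the zero locus of $S_{E,r-k}\to Q_{E,k}\otimes p_k^*\Omega^1_X$. So a map $\tilde C\to\mathrm{Gr}_k(E)$ factors through it exactly when the kernel is killed by the $(\phi\nu)^*\Omega^1_X$-valued field. A quotient that is invariant only after composing with $\phi^*\Omega^1_X\to\Omega^1_Y$ (or further to $\Omega^1_{\tilde C}$) does not qualify. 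If the universal property did accept that weaker condition, you could apply it to $\mathcal{G}r_k(\phi^*\mathcal{E})$ itself with its universal quotient. That would give the reverse containment outright, and there would be nothing to sidestep. So the Seshadri detour is equivalent to the statement it was meant to avoid.

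The discrepancy is also not confined to a ramification divisor. In (1), $\phi$ need not be surjective; for instance it can be a closed embedding, which is how the paper later uses the proposition. Then $\phi^*\Omega^1_X\to\Omega^1_Y$ is nowhere injective. Take a curve $Y\subset X$ along which the tangential part of $\theta$ vanishes. The $\Omega^1_Y$-valued Higgs Grassmannian is then all of $\mathrm{Gr}_k(E|_Y)$, and ampleness of $\mathcal{O}(1)$ there means ampleness of the ordinary bundle $E|_Y$. H-ampleness of $\mathcal{E}$ only controls quotients invariant under the full $\Omega^1_X|_Y$-valued field, so it does not give this.

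The way out is the paper's convention: take the Higgs field of $\phi^*\mathcal{E}$ to be $\phi^*\theta$ with values in $\phi^*\Omega^1_X$. This is the convention under which the universal property quoted in Section 2 holds. With it, the square is Cartesian and $\mathcal{G}r_k(\phi^*\mathcal{E})=\Phi_k^{-1}(\mathcal{G}r_k(\mathcal{E}))$. The induced map $\widetilde{\phi}$ is finite with $\widetilde{\phi}^*\mathcal{O}_{\mathcal{G}r_k(\mathcal{E})}(1)=\mathcal{O}_{\mathcal{G}r_k(\phi^*\mathcal{E})}(1)$. Part (1) then reduces to the one-line observation you already made: a finite pullback of an ample bundle is ample.

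Your worry is legitimate relative to the $\Omega^1_Y$-valued pullback written in the Preliminaries; the paper asserts the Cartesian square without comment. But under that convention your proposal supplies nothing that controls the extra quotients, so part (1) remains unproved.
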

 
 \begin{proof}
     Note that, by the universal property of Higgs Grassmann bundle, for each $k$ with $1\,\leq\, k \,\leq \, r-1$, there exists a finite map $\widetilde{\phi} \,:\, \mathcal{G}r_k(\phi^*\mathcal{E})\,\longrightarrow \,\mathcal{G}r_k(\mathcal{E})$ such that we have the following commutative fiber product diagram 
\begin{center}
 \begin{tikzcd} 
 \mathcal{G}r_k(\phi^*\mathcal{E}) \arrow[r, "\widetilde{\phi}"] \arrow[d, "\tilde{{\rho}_k}"]
& \mathcal{G}r_k(\mathcal{E}) \arrow[d,"\rho_k"]\\
Y\arrow[r, "\phi" ]
& X
\end{tikzcd}
\end{center}
and also for each $k$ with $1\,\leq\, k \,\leq \,r-1$, $$\widetilde{\phi}^*\mathcal{Q}_{\mathcal{E},k} \ =\ \mathcal{Q}_{{\phi}^*\mathcal{E},k}.$$
     This implies that $\det\bigl( \widetilde{\phi}^*\mathcal{Q}_{\mathcal{E},k} \bigr) \,=\, \det\bigl(\mathcal{Q}_{{\phi}^*\mathcal{E},k}\bigr),$ i.e., $ \widetilde{\phi}^*\mathcal{O}_{\mathcal{G}r_k(\mathcal{E})}(1)\, =\, \mathcal{O}_{\mathcal{G}r_k(\phi^*\mathcal{E})}(1)$ for each $1\,\leq\, k \,\leq\, r-1.$ This show that if $\mathcal{E}$ is Higgs ample, meaning each $\mathcal{O}_{\mathcal{G}r_k(\mathcal{E})}(1)$ is ample, then $\mathcal{O}_{\mathcal{G}r_k(\phi^*\mathcal{E})}(1)$ is also ample for each $k$ by \cite[Chapter 1]{L1}. This proves that $\phi^*\mathcal{E}$ is Higgs ample.

     Moreover, if $\phi$ is a finite surjective map and $\phi^*\mathcal{E}$ is Higgs ample so that $\mathcal{O}_{\mathcal{G}r_k(\phi^*\mathcal{E})}(1)$ is ample for each $k$, then $\mathcal{O}_{\mathcal{G}r_k(\mathcal{E})}(1)$ is ample for each $k$  by \cite[Chapter 1]{L1}, in other words, $\mathcal{E}$ is Higgs ample. This completes the proof.
 \end{proof}

\begin{prop}\label{thm3.2}
If $\mathcal{E}$ is an ample Higgs vector bundle of rank $r$ on a smooth projective variety $X$, then every Higgs quotient vector bundle of $\mathcal{E}$ is Higgs ample.
\end{prop}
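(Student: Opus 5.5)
The plan is to realize the Higgs Grassmann schemes of a Higgs quotient $\mathcal{F} = (F,\theta_F)$ of $\mathcal{E}$ as closed subschemes of those of $\mathcal{E}$, compatibly with the tautological line bundles. Then ampleness restricts, and the determinant and H-nef conditions are handled separately. Let $s = \rank(F)$ and fix the Higgs quotient $q \colon \mathcal{E} \to \mathcal{F}$. If $s = r$ then $\mathcal{F} \cong \mathcal{E}$ and there is nothing to prove, so assume $s < r$.

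\textbf{Step 1: the Grassmann schemes.} For $1 \leq k \leq s-1$, consider the universal Higgs quotient $\mathcal{Q}_{\mathcal{F},k}$ on $\mathcal{G}r_k(\mathcal{F})$. Composing $\rho^*q$ with $\rho^*\mathcal{F} \to \mathcal{Q}_{\mathcal{F},k}$ exhibits $\mathcal{Q}_{\mathcal{F},k}$ as a locally free rank $k$ Higgs quotient of the pullback of $\mathcal{E}$, since compositions of Higgs morphisms are Higgs. The universal property of $\mathcal{G}r_k(\mathcal{E})$ then gives a morphism $\iota_k \colon \mathcal{G}r_k(\mathcal{F}) \to \mathcal{G}r_k(\mathcal{E})$ over $X$ with $\iota_k^*\mathcal{Q}_{\mathcal{E},k} = \mathcal{Q}_{\mathcal{F},k}$. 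This morphism is the restriction of the standard closed embedding $\text{Gr}_k(F) \hookrightarrow \text{Gr}_k(E)$: a quotient of $F$ composed with $q$ determines, and is determined by, a quotient of $E$ whose kernel contains $\ker q$. Hence $\iota_k$ is a closed immersion, in particular finite. Taking determinants gives $\iota_k^*\mathcal{O}_{\mathcal{G}r_k(\mathcal{E})}(1) = \mathcal{O}_{\mathcal{G}r_k(\mathcal{F})}(1)$. Since the restriction of an ample line bundle to a closed subscheme is ample, condition (1) of Definition \ref{defi2} holds for $\mathcal{F}$.

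\textbf{Step 2: the determinant.} For condition (2), the case $k = s$ gives $\det(F)$ as $\iota_s^*\mathcal{O}_{\mathcal{G}r_s(\mathcal{E})}(1)$, where $\iota_s \colon X \to \mathcal{G}r_s(\mathcal{E})$ is the section defined by the quotient $\mathcal{F}$ itself. This makes sense because $s \leq r-1$, so $\mathcal{O}_{\mathcal{G}r_s(\mathcal{E})}(1)$ is ample by hypothesis. A section is a closed immersion, so $\det(F)$ is ample.

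\textbf{Step 3: H-nefness.} It remains to check that $\mathcal{F}$ is H-nef in the sense of Definition \ref{defi-nef}. This is the step I expect to be the real obstacle, because H-nefness is defined inductively through the universal quotients $\mathcal{Q}_{\mathcal{F},k}$ on the Grassmann schemes. The approach is to prove by induction on rank the auxiliary claim that \emph{every Higgs quotient of an H-nef Higgs bundle, pulled back along any morphism, is H-nef}. By Step 1, $\mathcal{Q}_{\mathcal{F},k} = \iota_k^*\mathcal{Q}_{\mathcal{E},k}$, and $\mathcal{Q}_{\mathcal{E},k}$ is H-nef because $\mathcal{E}$ is. So it suffices to know that H-nefness is preserved under pullback by closed immersions, which again follows from the universal property via $\mathcal{G}r_j(\iota^*\mathcal{G}) = \mathcal{G}r_j(\mathcal{G}) \times_X Y$ and the fact that nefness of line bundles restricts. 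The determinant part of nefness follows exactly as in Step 2. If that induction turns out to be delicate, I would simply cite the corresponding statement for H-nef bundles from \cite{BCO25}, where quotients and pullbacks of H-nef Higgs bundles are shown to be H-nef.
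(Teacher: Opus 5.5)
Your proposal is correct and follows the paper's argument. The paper likewise uses the universal property to get $i\colon \mathcal{G}r_k(\mathcal{F})\to\mathcal{G}r_k(\mathcal{E})$ with $i^*\mathcal{Q}_{\mathcal{E},k}=\mathcal{Q}_{\mathcal{F},k}$, takes determinants to pull back $\mathcal{O}(1)$, and lets $k$ run up to $\rank(\mathcal{F})$ to cover $\det F$. Your write-up is more complete on several points the paper leaves implicit: you show $i$ is a closed immersion (which the appeal to \cite[Chapter 1]{L1} tacitly needs), you treat $\rank(\mathcal{F})=r$ separately, and you check the H-nef requirement of Definition \ref{defi2}.
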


\begin{proof}
    Let $\mathcal{F}$ be a Higgs quotient vector bundle of $\mathcal{E}$. Then, for each $k$ with $1\,\leq\, k \,\leq\, \rank(\mathcal{F})$, there exists a morphism $i\,:\,\mathcal{G}r_k(\mathcal{F}) \,\longrightarrow\, \mathcal{G}r_k(\mathcal{E})$ over $X$ such that $i^*\mathcal{Q}_{\mathcal{E},k} \,=\, \mathcal{Q}_{\mathcal{F},k}$. 

    Thus, we have $\det\bigl( i^*\mathcal{Q}_{\mathcal{E},k} \bigr) \,=\, i^*\mathcal{O}_{\mathcal{G}r_k(\mathcal{E})}(1) \,=\, \det\bigl( \mathcal{Q}_{\mathcal{F},k} \bigr) \,=\, \mathcal{O}_{\mathcal{G}r_k(\mathcal{F})}(1).$
    Now the result follows from \cite[Chapter 1]{L1}.
\end{proof}

\begin{xrem}
Let $\mathcal{E}$ be a Higgs vector bundle over a smooth projective variety $X$. Then $\mathcal{E}$ is Higgs ample if and only if the following two condition hold: 

\begin{itemize}
        \item The line bundle $\det(E)$ is ample.
        \item For every positive dimensional subvariety $Y$ of $X$ (including $X$ itself) with $i\,:\,Y \,\hookrightarrow \,X$ being the inclusion map, all the Higgs quotient bundles $\mathcal{Q}$ of $i^*\mathcal{E}$ are Higgs ample.
    \end{itemize}
    This follows from Proposition \ref{thm3.1} and Proposition \ref{thm3.2} applied to  $i^*\mathcal{E}$.
\end{xrem}

\begin{thm}\label{thm3.3}
     Let $X$ be a smooth projective variety, and let $h \,\in\, {\rm NS}(X)_{\mathbb{R}}$ be a fixed ample class on $X$. Then a Higgs vector bundle $\mathcal{E}\,=\,(E,\,\theta)$ of rank $r$ is Higgs ample if and only if the  following two conditions hold:

    \vspace{2mm}

    \begin{itemize}
        \item The line bundle $\det(E)$ is ample.
        \item  For any map $f\,:\,C\,\longrightarrow\, X$ from a smooth projective curve $C$ to $X$, there is a positive real number $\delta \,>\,0$ such that 
    $$\mu_{min}^{H}(f^*\mathcal{E})\,\ \geq\,\ \delta\bigl(C\cdot f^*h \bigr).$$
    Here $\delta$ is independent of $C$ and $f$.
    \end{itemize}
\end{thm}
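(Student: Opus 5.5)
We prove the two implications separately, modelled on the Barton--Kleiman criterion for ordinary bundles in \cite{L2}. The main tool is the Seshadri-type criterion for ampleness of a line bundle $L$ on a projective scheme $Z$ with a fixed ample class $H_Z$: $L$ is ample if and only if there is $\epsilon>0$ with $\deg(g^*L)\geq \epsilon\,(C\cdot g^*H_Z)$ for every map $g:C\to Z$ from a smooth projective curve. We apply it to $Z=\mathcal{G}r_k(\mathcal{E})$ and $L=\mathcal{O}_{\mathcal{G}r_k(\mathcal{E})}(1)$. We use the universal property of $\mathcal{G}r_k(\mathcal{E})$ to pass between curves $g:C\to\mathcal{G}r_k(\mathcal{E})$ and pairs consisting of $f=\rho_k\circ g:C\to X$ and a rank $k$ locally free Higgs quotient $\mathcal{Q}=g^*\mathcal{Q}_{\mathcal{E},k}$ of $f^*\mathcal{E}$. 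Under this correspondence $\deg g^*\mathcal{O}(1)=\deg \mathcal{Q}$. We also use that $\mu^H_{\min}(f^*\mathcal{E})$ is the minimal slope of Higgs quotients, and that on a curve every torsion-free quotient is locally free.

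\textbf{Ampleness implies the slope bound.} Assume $\mathcal{E}$ is Higgs ample. For each $k$ with $1\leq k\leq r-1$, fix the ample class $H_k=\mathcal{O}_{\mathcal{G}r_k(\mathcal{E})}(1)\otimes\rho_k^*h$ on $\mathcal{G}r_k(\mathcal{E})$; it is ample because the first factor is ample and $\rho_k^*h$ is nef. Since $\mathcal{O}_{\mathcal{G}r_k(\mathcal{E})}(1)$ is ample, Seshadri's criterion gives $\epsilon_k>0$ with $\deg g^*\mathcal{O}(1)\geq \epsilon_k\,(C\cdot g^*H_k)$. Because $\mathcal{O}(1)$ is nef, $C\cdot g^*H_k\geq C\cdot f^*h$, so $\deg\mathcal{Q}\geq \epsilon_k\,(C\cdot f^*h)$. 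For $k=r$ the only quotient is $f^*\mathcal{E}$ itself, and ampleness of $\det E$ gives $\deg f^*E\geq \epsilon_r\,(C\cdot f^*h)$ for some $\epsilon_r>0$, by the same criterion applied on $X$. Every nonzero Higgs quotient of $f^*\mathcal{E}$ has rank at most $r$, so setting $\delta=\min_k \epsilon_k/k$ gives $\mu(\mathcal{Q})\geq \delta\,(C\cdot f^*h)$ for all of them, hence $\mu^H_{\min}(f^*\mathcal{E})\geq \delta\,(C\cdot f^*h)$. Here $\delta$ does not depend on $C$ or $f$.

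\textbf{The slope bound implies ampleness.} Assume $\det E$ is ample and the uniform bound holds. Take $g:C\to\mathcal{G}r_k(\mathcal{E})$ and the corresponding quotient $\mathcal{Q}$. Then
$$\deg g^*\mathcal{O}(1)=\deg\mathcal{Q}\geq k\,\mu^H_{\min}(f^*\mathcal{E})\geq \delta\,(C\cdot f^*h)\geq 0,$$
so $\mathcal{O}_{\mathcal{G}r_k(\mathcal{E})}(1)$ is nef. To obtain ampleness we must bound $\deg g^*\mathcal{O}(1)$ below by a positive multiple of $C\cdot g^*H_k$. This is the \textbf{main obstacle}, because curves contracted by $\rho_k$ have $C\cdot f^*h=0$. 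To handle it, note that $\mathcal{O}(1)$ restricted to the fibres of $\rho_k$ is the Plücker polarization on a closed subscheme of the Grassmannian, hence relatively ample. So there is $m$ with $\mathcal{O}(1)\otimes\rho_k^*h^{\otimes m}$ ample, and we take this as $H_k$. Then
$$C\cdot g^*H_k=\deg g^*\mathcal{O}(1)+m\,(C\cdot f^*h)\leq (1+m/\delta)\deg g^*\mathcal{O}(1).$$
By Seshadri's criterion, $\mathcal{O}(1)$ is ample on $\mathcal{G}r_k(\mathcal{E})$.

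The same argument, applied to Higgs quotients pulled back to $\mathcal{G}r_k(\mathcal{E})$, shows that $\mathcal{E}$ is H-nef in the inductive sense of Definition \ref{defi-nef}. The reason is that the slope bound is inherited by the universal quotients $\mathcal{Q}_{\mathcal{E},k}$: Higgs quotients of $g^*\mathcal{Q}_{\mathcal{E},k}$ are Higgs quotients of $f^*\mathcal{E}$. Together with ampleness of $\det E$, this shows $\mathcal{E}$ is Higgs ample in the sense of Definition \ref{defi2}.
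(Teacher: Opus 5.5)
Your proposal is correct and follows essentially the paper's route. In the forward direction you get the slope bound from ampleness of $\det E$ and of each $\mathcal{O}_{\mathcal{G}r_k(\mathcal{E})}(1)$ via the universal property; bounding quotients of every rank and taking $\delta=\min_k\epsilon_k/k$ is a cleaner way to make $\delta$ independent of $C$ and $f$ than the paper's semistable/Harder--Narasimhan case split. The converse is the paper's argument that $\xi_k-\delta\rho_k^*h$ is nef plus relative ampleness of $\xi_k$, repackaged through the Seshadri ratio criterion, and the only other difference is that you argue H-nefness directly by inheritance of nonnegative Higgs-quotient degrees where the paper cites \cite[Lemma 3.3]{BBG19}.
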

    
\begin{proof}
    We first assume that the following two conditions hold: 
    \begin{itemize}
        \item The line bundle $\det(E)$ is ample.
        
        \item  For any map $f\,:\,C\,\longrightarrow\, X$ from a smooth projective curve $C$ to $X$, there is a positive real number $\delta \,>\,0$ (independent of $C$ and $f$) such that 
    $$\mu_{min}^{H}(f^*\mathcal{E})\,\ \geq\,\ \delta\bigl(C\cdot f^*h \bigr)\,\ \geq\,\ 0.$$
    \end{itemize}
    By \cite[Lemma 3.3]{BBG19} the Higgs bundle $\mathcal{E}$ is Higgs nef because $\mu_{min}^{H}(f^*\mathcal{E})\,\geq\, 0$. Denote $\xi_k \,=\, \mathcal{O}_{\mathcal{G}r_k(\mathcal{E})}(1).$ We will show that $\xi_k$ are ample for all $1\,\leq\, k \,\leq \,r-1.$ 
    
    Our claim is that $\xi_k-\delta\rho_k^*h$ is nef. For this it is enough to show that if $B$ is an irreducible curve in $\mathcal{G}r_k(\mathcal{E})$, then $$\bigl(\xi_k-\delta\rho_k^*h\bigr)\cdot B\,\ \geq\,\ 0.$$
    Note that if $B$ is contained in some fiber of $\rho_k,$ then $$\bigl(\xi_k-\delta\rho_k^*h\bigr)\cdot B\ =\ \xi_k \cdot B\ \geq\ 0,$$
    because $\xi_k$ is relatively ample with respect to the map $\rho_k.$ On the other hand, if $B$ is not contained in any fiber, then consider the normalization of $B$, which is denoted by $\widetilde{B}.$ We denote the following composition map by $\psi_k$ where $\widetilde{B} \, \longrightarrow\, B $ is the normalization map: 
    $$\psi_k \,:\, \widetilde{B}\, \longrightarrow\, B \,\hookrightarrow \,
    \mathcal{G}r_k(\mathcal{E}).$$ 

    Set $\eta_B \ =\ \rho_k\circ \psi_k.$
    By hypothesis we then have 
\begin{align}\label{seq4}
\mu_{min}^{H}(\eta_B^*\mathcal{E})\ \geq\ \delta\bigl(\widetilde{B}\cdot \eta_B^*h \bigr)\ \geq\ 0.
\end{align}
    Now $$\bigl(\xi_k-\delta\rho_k^*h\bigr)\cdot B\ =\ \xi_k\cdot B - \delta\bigl(\rho_k^*h \cdot B \bigr).$$

    Note that we have the following short exact sequence \begin{align*}
 0\, \longrightarrow\, \mathcal{S}_{\mathcal{E},r-k}\, \xrightarrow{\,\,\,\psi\,\,\,}\, \rho_k^*\mathcal{E} \, \xrightarrow{\,\,\,\eta\,\,\,}\,
 \mathcal{Q}_{\mathcal{E},k}\,\longrightarrow\, 0.
\end{align*}
Thus we have 
$$\eta_B^*\mathcal{E}\ =\ \psi_k^*\rho_k^* E\ \longrightarrow\ \psi_k^*\mathcal{Q}_{\mathcal{E},k} \ \longrightarrow\ 0.$$
This implies that $$\xi_k\cdot B\ \geq\ \mu\bigl( \psi_k^*\mathcal{Q}_{\mathcal{E},k}\bigr)\ \geq \ \mu^H_{\min}\bigl(\eta_B^*\mathcal{E}\bigr).$$
Consequently, using \eqref{seq4} we have
$$\bigl(\xi_k-\delta\rho_k^*h\bigr)\cdot B\ =\ \xi_k\cdot B - \delta\bigl(\rho_k^*h \cdot B \bigr) \ \geq\ \mu^H_{\min}\bigl(\eta_B^*\mathcal{E}\bigr) - \delta\bigl(\tilde{B}\cdot \eta_B^*h \bigr) \ \geq\ 0.$$

This completes the proof of our claim.

Next note that as $\xi_k$ is relatively ample, we have  $a\xi_k+\delta\rho_k^*h$ is ample for $0\,<\,a\, \ll \,1.$ 
Thus $$\bigl(\xi_k-\delta\rho_k^*h\bigr) + \bigl( a\xi_k+\delta\rho_k^*h\bigr)\ =\ (1+a)\xi_k$$ is ample for $0 \,<\, a \,\ll \,1.$ Hence $\xi_k$ is ample for each $k$, and $\det(E)$ is ample by hypothesis. This shows that $\mathcal{E}$ is Higgs ample.

Conversely, suppose $\mathcal{E}$ is Higgs ample. Therefore the line bundle $\det(E)$ is ample by definition. We just need to prove the following:
    
    For any map $f\,:\,C\,\longrightarrow\, X$ from a smooth projective curve $C$ to $X$, there is a positive real number $\delta \,>\,0$ ( independent of $C$ and $f$) such that 
    $$\mu_{min}^{H}(f^*\mathcal{E})\ \geq\ \delta\bigl(C\cdot f^*h \bigr).$$

    We consider the following two cases : 

    \begin{enumerate}
        \item {\bf Case 1}:\  Suppose that $f^*\mathcal{E}$ is a slope semistable Higgs vector bundle. By \cite[Corollary 1.4.11]{L1} there exists an $\epsilon \,>\, 0$ such that 
        $$\dfrac{\det(E)\cdot B}{B\cdot h}\ \geq\ \epsilon$$ for every irreducible curve $B\,\subseteq\, X$.
        Then $$\mu_{\min}^H\bigl(f^*\mathcal{E}\bigr)\, = \,\mu\bigl(f^*\mathcal{E}\bigr)\, =\, \dfrac{\det(E)\cdot\widetilde{C}}{r} \,\geq\, \dfrac{\epsilon}{r}\bigl(h\cdot \widetilde{C}\bigr) \,=\, \delta\bigl(C\cdot f^*h \bigr),$$
        where $\widetilde{C}$ is the image of $C$ in $X$ and $\delta \,=\, \dfrac{\epsilon}{r}.$

         \item {\bf Case 2}:\ Assume that $f^*\mathcal{E}$ is not Higgs semistable, and let \begin{align*}
  0 \,= \,\HN_0(\mathcal{E}) \,\subsetneq\, \HN_1(\mathcal{E}) \,\subsetneq\, \HN_2(\mathcal{E}) \,\subsetneq\, \cdots\,\subsetneq \,\HN_{l-1}(\mathcal{E}) \,\subsetneq\, \HN_l(\mathcal{E})\,=\, f^*\mathcal{E}
 \end{align*}
 be the Harder-Narasimhan filtration of $f^*\mathcal{E}.$ Let $s\,=\, \rank\bigl( f^*\mathcal{E}/\HN_{l-1}(\mathcal{E})\bigr)$. Then by the universal property of Higgs Grassmannian, there exists a lift 
 $f_s \,:\, C \,\longrightarrow \,\mathcal{G}r_s(\mathcal{E})$ of $f$ such that 
 $$\bigl(f^*\mathcal{E}/\HN_{l-1}(\mathcal{E})\bigr)\ =\ f_s^*\mathcal{Q}_{\mathcal{E},s}.$$
 
 As $\mathcal{O}_{\mathcal{G}r_s(\mathcal{E})}(1) \,=\, \det(\mathcal{Q}_{\mathcal{E},s})$ is ample, by \cite[Proposition 1.3.7]{L1} there exists a positive real number $\eta\,>\,0$ such that 
 $$\det(\mathcal{Q}_{\mathcal{E},s})-\eta\rho_s^*h$$ is ample, so that 
 \begin{align*}
     \mu_{\min}(f^*\mathcal{E}) \,=\, \dfrac{1}{s}\deg(f_s^*\mathcal{Q}_{\mathcal{E},s})\, =\, \dfrac{f_s^*\bigl(\det(\mathcal{Q}_{\mathcal{E},s})\bigr)\cdot C}{s} \,\geq\, \delta (C\cdot f^*h),
 \end{align*}
 where $\delta \,=\, \dfrac{\eta}{r-1}.$ 
 \end{enumerate}
This completes the proof.
\end{proof}

\begin{corl}[{Barton-Kleimann type criterion for \text{H}-ampleness}]\label{corl3.4}
Let $X$ be a smooth projective variety, and let $h \,\in \,{\rm NS}(X)_{\mathbb{R}}$ be a fixed ample class on $X$. Then a Higgs vector bundle $\mathcal{E}\,=\,(E,\,\theta)$ is Higgs ample if and only if the  following two conditions holds:

    \vspace{2mm}

    \begin{enumerate}
        \item The line bundle $\det(E)$ is ample.
        
        \item  There exists a $\delta\,\in\, \mathbb{R}_{>0}$ such that for every morphism $f\,:\,C\,\longrightarrow\, X$ from a smooth projective curve $C$, and for every Higgs quotient $\mathcal{Q}$ of $f^*\mathcal{E},$ the inequality
    $$\deg(\mathcal{Q})\,\ \geq\,\ \delta\bigl(C\cdot f^*h \bigr)$$ holds.
    \end{enumerate}
\end{corl}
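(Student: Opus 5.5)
The plan is to deduce Corollary \ref{corl3.4} directly from Theorem \ref{thm3.3}. The only task is to show that condition (2) of the corollary is equivalent to the second condition of the theorem. The bridge is the fact recalled in the preliminaries: for a Higgs bundle $\mathcal{F}$ on a smooth projective curve, $\mu_{\min}^H(\mathcal{F})$ is the minimum of the slopes of all nonzero Higgs quotients of $\mathcal{F}$, and it is attained by the last graded piece of the Harder--Narasimhan filtration. Both the theorem and the corollary already require $\det(E)$ to be ample, so that condition can be carried along unchanged.

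\emph{Theorem condition implies corollary condition.} Assume $\mu_{\min}^H(f^*\mathcal{E}) \geq \delta\,(C\cdot f^*h)$ for all $f\colon C\to X$, with $\delta$ independent of $f$. Let $\mathcal{Q}$ be a nonzero Higgs quotient of $f^*\mathcal{E}$ of rank $q\geq 1$. Then
$$\deg(\mathcal{Q}) \;=\; q\,\mu(\mathcal{Q}) \;\geq\; q\,\mu_{\min}^H(f^*\mathcal{E}) \;\geq\; q\,\delta\,(C\cdot f^*h) \;\geq\; \delta\,(C\cdot f^*h).$$
The last step uses $q\geq 1$ together with $C\cdot f^*h\geq 0$, which holds because $h$ is ample and the pullback of an ample class to a curve has nonnegative degree. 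Hence condition (2) holds with the same $\delta$.

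\emph{Corollary condition implies theorem condition.} Assume $\deg(\mathcal{Q})\geq \delta\,(C\cdot f^*h)$ for every Higgs quotient $\mathcal{Q}$ of every pullback $f^*\mathcal{E}$. Apply this to the minimal Harder--Narasimhan quotient $\mathcal{Q}=f^*\mathcal{E}/\HN_{l-1}$, which has rank $s\leq r$ and satisfies $\mu(\mathcal{Q})=\mu_{\min}^H(f^*\mathcal{E})$. This gives
$$\mu_{\min}^H(f^*\mathcal{E}) \;=\; \frac{\deg(\mathcal{Q})}{s} \;\geq\; \frac{\delta}{r}\,(C\cdot f^*h).$$
So the theorem's condition holds with the uniform constant $\delta/r$, which again does not depend on $C$ or $f$. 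Theorem \ref{thm3.3} then gives the equivalence with Higgs ampleness.

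The main point needing care is uniformity: the constant must be independent of $f$ and of the quotient. This is why $\delta$ is rescaled by the rank bound $r$ rather than by the rank $s$, which varies with $f$. One must also use $C\cdot f^*h\geq 0$ in the first direction, since without it multiplying by $q\geq 1$ would not preserve the inequality. If "Higgs quotient" is meant to allow torsion quotients, I would reduce to locally free ones, because on a smooth curve a torsion quotient only increases degree relative to its torsion-free part. Beyond these points the argument is bookkeeping on top of Theorem \ref{thm3.3}.
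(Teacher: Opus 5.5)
Your argument is correct and is essentially the paper's proof. Both directions go through Theorem~\ref{thm3.3}: one uses $\deg(\mathcal{Q})=\rank(\mathcal{Q})\,\mu(\mathcal{Q})\geq \rank(\mathcal{Q})\,\mu^H_{\min}(f^*\mathcal{E})$, and the other uses the last Harder--Narasimhan quotient $f^*\mathcal{E}/\HN_{l-1}$. Your constants $\delta$ and $\delta/r$, justified by $\rank(\mathcal{Q})\geq 1$, $s\leq r$ and $C\cdot f^*h\geq 0$, are genuinely uniform. The paper's $k\delta'$ and $\delta/s$ are not, since they depend on the quotient and hence on $f$.

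One caveat concerns your closing remark about torsion quotients. A pure torsion Higgs quotient always exists: take a length-one skyscraper at a point, defined by an eigenvector of the transpose of the Higgs field on the fibre there. Such a quotient has degree $1$. Since $C\cdot f^*h$ is unbounded over curves, condition (2) would then fail for every $\mathcal{E}$. So (2) must be read as a statement about nonzero locally free quotients; the torsion case cannot be reduced to them.
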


    \begin{proof}
        If $\mathcal{E}$ is $\text{H}$-ample, then there exists a positive number $\delta' \,>\, 0$ such that 
        $$\deg(\mathcal{Q})\ =\ k\mu(\mathcal{Q})\ \geq\ k\mu_{min}(f^*\mathcal{E})\ \geq\ \delta(C\cdot f^*h),$$
        where $\rank(\mathcal{Q})\,=\,k$ and $\delta \,=\, k\delta' \,>\, 0$.
        
        Conversely, assume that $\mathcal{E} \,=\, (E,\,\theta)$ satisfies the two conditions (1) and (2). Now considering the Harder-Narasimhan filtration of $f^*\mathcal{E}$: 
        \begin{align*}
  0 \,=\, \HN_0(\mathcal{E}) \,\subsetneq\, \HN_1(\mathcal{E}) \,\subsetneq\, \HN_2(\mathcal{E})\, \subsetneq \,\cdots\,\subsetneq \,\HN_{l-1}(\mathcal{E}) \,\subsetneq\, \HN_l(\mathcal{E}) \,=\, f^*\mathcal{E}.
 \end{align*}
 We have $$\mu_{\min}^H(f^*\mathcal{E})\,=\, \dfrac{\deg(f^*\mathcal{E}/ \HN_{l-1}(\mathcal{E}))}{s} \,\geq\, \delta' \bigl(C\cdot f^*h\bigr),$$
 where $s\,=\,\rank(f^*\mathcal{E}/ \HN_{l-1}(\mathcal{E}))$ and $\delta'\,=\,\dfrac{\delta}{s}.$ Now the result follows from Theorem \ref{thm3.3}.
\end{proof}

\begin{corl}
    Let $\mathcal{E} \,=\, (E,\,\theta)$ and $\mathcal{F} \,=\, (F,\,\phi)$ be two Higgs vector bundles on a smooth projective variety $X$. Then
     if $\mathcal{E}$ is Higgs nef and $\mathcal{F}$ is Higgs ample, then $\mathcal{E} \otimes \mathcal{F}$ is Higgs ample.
\end{corl}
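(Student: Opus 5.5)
The plan is to verify the two conditions of Corollary \ref{corl3.4} for $\mathcal{E}\otimes\mathcal{F}$, with the Higgs field $\theta\otimes 1+1\otimes\phi$.

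\emph{Determinant.} Write $r=\rank E$ and $s=\rank F$. Then
$$\det(E\otimes F)\ =\ \det(E)^{\otimes s}\otimes\det(F)^{\otimes r}.$$
Since $\mathcal{E}$ is H-nef, $\det(E)$ is nef. Since $\mathcal{F}$ is H-ample, $\det(F)$ is ample. A nef class plus an ample class is ample, so condition (1) holds.

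\emph{Curve condition.} Fix the ample class $h$. Corollary \ref{corl3.4} applied to $\mathcal{F}$ gives $\delta>0$ such that $\deg(\mathcal{Q}')\geq\delta\,(C\cdot f^*h)$ for every $f:C\to X$ and every Higgs quotient $\mathcal{Q}'$ of $f^*\mathcal{F}$. In particular $\mu^H_{\min}(f^*\mathcal{F})\geq(\delta/s)(C\cdot f^*h)$. Since $\mathcal{E}$ is H-nef, \cite[Lemma 3.3]{BBG19} (used in the converse direction, which I take to be part of that characterization) gives $\mu^H_{\min}(f^*\mathcal{E})\geq 0$ for every curve $f$.

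The key input is the Higgs analogue of the tensor product formula for minimal slopes on a curve in characteristic $0$:
$$\mu^H_{\min}(f^*\mathcal{E}\otimes f^*\mathcal{F})\ =\ \mu^H_{\min}(f^*\mathcal{E})+\mu^H_{\min}(f^*\mathcal{F}).$$
It suffices to prove the inequality ``$\geq$''. Taking Higgs Harder--Narasimhan filtrations of both factors filters the tensor product with graded pieces $\gr_i\otimes\gr_j$. This reduces the claim to: the tensor product of semistable Higgs bundles on a curve is semistable. That in turn follows from Simpson's correspondence, or from the existence of approximate Hermitian--Yang--Mills metrics on semistable Higgs bundles in characteristic $0$, so it is available over $\mathbb{C}$ and transfers to any algebraically closed $\mathbb{K}$ of characteristic $0$ by the Lefschetz principle.

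Granting this, for every quotient $\mathcal{Q}$ of $f^*(\mathcal{E}\otimes\mathcal{F})$ of rank $k\leq rs$ we get
$$\deg(\mathcal{Q})\ \geq\ k\,\mu^H_{\min}\ \geq\ \mu^H_{\min}\ \geq\ 0+(\delta/s)(C\cdot f^*h),$$
using $k\geq1$ and $\mu^H_{\min}\geq 0$. So condition (2) holds with $\delta/s$, and Corollary \ref{corl3.4} shows that $\mathcal{E}\otimes\mathcal{F}$ is Higgs ample.

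The main obstacle is the semistability of tensor products of semistable Higgs bundles, which I would quote from Simpson rather than reprove. A minor point to check is that the nefness criterion in \cite[Lemma 3.3]{BBG19} really goes in the direction needed here, namely that H-nef implies $\mu^H_{\min}\geq0$ on every curve. If it does not, I would recover that inequality directly: apply the curve argument from the converse direction of Theorem \ref{thm3.3} to the nef classes $\mathcal{O}_{\mathcal{G}r_k(\mathcal{E})}(1)$, lifting the last Harder--Narasimhan quotient of $f^*\mathcal{E}$ to $\mathcal{G}r_k(\mathcal{E})$.
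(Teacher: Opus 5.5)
Your proof is correct and follows the paper's own argument. The steps match: the formula $\det(E\otimes F)=\det(E)^{\otimes s}\otimes\det(F)^{\otimes r}$ with nef plus ample, then $\mu^H_{\min}(f^*\mathcal{E})\geq 0$ from \cite[Lemma 3.3]{BBG19}, then additivity of $\mu^H_{\min}$ under tensor product, and finally the curve criterion (you cite Corollary \ref{corl3.4}, the paper cites the equivalent Theorem \ref{thm3.3}). Two small improvements over the paper's text: you justify the tensor-product formula via semistability of tensor products of semistable Higgs bundles, which the paper only asserts, and you attach the positive lower bound to $\mathcal{F}$ rather than to $\mathcal{E}$ as the paper's text mistakenly does.
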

     
     \begin{proof}
First  note that $$\det(E\otimes F)\ =\ \det(E)^{\otimes \rank(F)}\otimes \det(F)^{\otimes \rank(E)}.$$ 
 As  $\mathcal{E}$ is Higgs nef, theline bundle $\det(E)$ is nef, and since  $\mathcal{F}$ is Higgs ample, $\det(F)$ is ample. This shows that $\det(E\otimes F)$ is ample using \cite[Chapter 1]{L1}.

 We fix an ample class $h$ on $X$. Now, for any smooth curve $C$ and any map $f\,:\,C\,\longrightarrow \,X$, we have, by \cite[Lemma 3.3]{BBG19}, $\mu_{\min}^H(f^*E)\,\geq \,0$ and there exists a $\delta\, >\, 0$  (independent of $C$ and $f$) such that 
    $$\mu_{min}^{H}(f^*\mathcal{E})\ \geq\ \delta\bigl(C\cdot f^*h \bigr).$$

    Note that $$\mu_{min}^{H}(f^*(\mathcal{E}\otimes\mathcal{F}))\ =\ \mu_{min}^H(f^*\mathcal{E})+\mu_{min}(f^*\mathcal{F})\ \geq\ \delta\bigl(C\cdot f^*h \bigr).$$
    Thus the results follows from Theorem \ref{thm3.3}.
    \end{proof}

\begin{corl}\label{thm3.6}
    Let $$0\,\longrightarrow\, \mathcal{E}_1\,\longrightarrow\, \mathcal{E} \,\longrightarrow\, \mathcal{E}_2 \,\longrightarrow\, 0$$ be a short exact sequence of Higgs vector bundles over a smooth projective variety $X$, where $\mathcal{E}_1\,=\,(E_1,\,\theta_1)$ and
    $\mathcal{E}_2\,=\,(E_2,\,\theta_2)$ are Higgs ample. Then $\mathcal{E}$ is Higgs ample.
\end{corl}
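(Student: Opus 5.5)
The plan is to verify the two conditions of Corollary \ref{corl3.4} for $\mathcal{E}$, using the same ample class $h$ for all three bundles. For the determinant condition, note that $\det(E)\,=\,\det(E_1)\otimes\det(E_2)$. Both factors are ample by Definition \ref{defi2}, since $\mathcal{E}_1$ and $\mathcal{E}_2$ are Higgs ample, so their tensor product is ample.

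For the curve condition, apply Corollary \ref{corl3.4} to $\mathcal{E}_1$ and $\mathcal{E}_2$. This gives constants $\delta_1,\delta_2>0$ such that, for every morphism $f\,:\,C\,\longrightarrow\, X$ from a smooth projective curve, every Higgs quotient of $f^*\mathcal{E}_i$ has degree at least $\delta_i(C\cdot f^*h)$. Set $\delta\,=\,\min(\delta_1,\delta_2)$.

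Now fix $f$ and a nonzero Higgs quotient $f^*\mathcal{E}\,\longrightarrow\, \mathcal{Q}\,\longrightarrow\, 0$, with Higgs kernel $\mathcal{K}$. Pulling back the exact sequence gives $0\,\longrightarrow\, f^*\mathcal{E}_1\,\longrightarrow\, f^*\mathcal{E}\,\longrightarrow\, f^*\mathcal{E}_2\,\longrightarrow\, 0$, since pullback of a sequence of bundles stays exact. Let $\mathcal{Q}'$ be the image of $f^*\mathcal{E}_1$ in $\mathcal{Q}$. The image of a Higgs morphism is $\theta$-invariant, so $\mathcal{Q}'$ is a Higgs subsheaf of $\mathcal{Q}$. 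It is torsion-free, hence a Higgs bundle on $C$, and it is a Higgs quotient of $f^*\mathcal{E}_1$. The cokernel $\mathcal{Q}''\,=\,\mathcal{Q}/\mathcal{Q}'$ is a Higgs quotient of $f^*\mathcal{E}_2$, because $f^*\mathcal{E}\,\longrightarrow\,\mathcal{Q}\,\longrightarrow\,\mathcal{Q}''$ kills $f^*\mathcal{E}_1$. However, $\mathcal{Q}''$ may have torsion.

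Handling this torsion is the main obstacle. Write $\mathcal{Q}''_{\rm tf}$ for the quotient of $\mathcal{Q}''$ by its torsion subsheaf $T$. The Higgs field preserves torsion, so $\mathcal{Q}''_{\rm tf}$ is a locally free Higgs quotient of $f^*\mathcal{E}_2$. Moreover $\deg(\mathcal{Q}'')\,=\,\deg(\mathcal{Q}''_{\rm tf})+\text{length}(T)\,\geq\, \deg(\mathcal{Q}''_{\rm tf})$. Hence
$$\deg(\mathcal{Q})\ =\ \deg(\mathcal{Q}')+\deg(\mathcal{Q}'')\ \geq\ \deg(\mathcal{Q}')+\deg(\mathcal{Q}''_{\rm tf}).$$

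At least one of $\mathcal{Q}'$ and $\mathcal{Q}''_{\rm tf}$ is nonzero. If $\mathcal{Q}'\,=\,0$, then $\mathcal{Q}''\,=\,\mathcal{Q}$ is torsion-free, so $\mathcal{Q}''_{\rm tf}\,=\,\mathcal{Q}\neq 0$. Each nonzero term contributes at least $\delta(C\cdot f^*h)$. A zero term contributes degree $0\,\geq\, 0$, and $C\cdot f^*h\,\geq\, 0$ since $h$ is ample. Therefore $\deg(\mathcal{Q})\,\geq\,\delta(C\cdot f^*h)$.

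Both conditions of Corollary \ref{corl3.4} then hold, so $\mathcal{E}$ is Higgs ample. Higgs nefness is part of the output of Theorem \ref{thm3.3}, so it does not need a separate argument.
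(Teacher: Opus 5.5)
Your proof is correct and follows the paper's argument. Like the paper, you split $\mathcal{Q}$ into the image of $f^*\mathcal{E}_1$ and the cokernel, discard the torsion of the cokernel, and apply the curve criterion (Corollary \ref{corl3.4}) to each piece. Your choice $\delta=\min(\delta_1,\delta_2)$, with a vanishing piece treated separately, is actually more careful than the paper's $\delta=\delta_1+\delta_2$, which silently assumes that both $\mathcal{Q}_1$ and $\mathcal{Q}_2'$ are nonzero.
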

    
    \begin{proof}
    First note that $\det(E) \,= \,\det(E_1)\otimes \det(E_2)$ is ample. We fix an ample class $h$ on $X$. For any curve $C$ and any map $f\,:\,C\,\longrightarrow\, X,$ we choose a Higgs quotient $\mathcal{Q}$ of $f^*\mathcal{E}$. Then we can form the following diagram with exact rows and columns:
        \begin{center}
\begin{tikzcd} 
0 \arrow[r, " "] &f^*\mathcal{E}_1\arrow[r, " "] \arrow[d, " "] 
& \arrow[r, " "] f^*\mathcal{E} \arrow[r, " "] \arrow[d," "] 
& f^*\mathcal{E}_2 \arrow[r, "  "] \arrow[d," "] & 0  \\
0 \arrow[r, " "] & \mathcal{Q}_1  \arrow[r, " " ] \arrow[d, " "] & \mathcal{Q} \arrow[r, " "] \arrow[d," "]
&\mathcal{Q}_2 \arrow[r, " "] \arrow[d," "] & 0 \\
 & 0 & 0 & 0 
\end{tikzcd}
\end{center}
        Let $\mathcal{Q}_2'$ be $\mathcal{Q}_2$ modulo its torsion. Then by Theorem \ref{thm3.3} there exist $\delta_1\,>\,0$ and $\delta_2\,>\,0$ such that 
        $$\deg(\mathcal{Q}_1)\ \geq\ \delta_1\bigl(C\cdot f^*h\bigr), \hspace{3mm} \deg(\mathcal{Q}_2)\ \geq\ \deg(\mathcal{Q}_2')\ \geq\ \delta_2\bigl(C\cdot f^*h\bigr).$$ Let $\delta\,=\,\delta_1+\delta_2$, and thus we have $$\deg(\mathcal{Q})\ =\ \deg(\mathcal{Q}_1)+\deg(\mathcal{Q}_2)\ \geq\ \delta\bigl(C\cdot f^*h).$$
        Now the result follows from Theorem \ref{thm3.3}.
    \end{proof}

\begin{thm}\label{thm3.7}
Let $\mathcal{E}$ be a curve semistable Higgs vector bundle. If $c_1(E)$ is ample, then $\mathcal{E}$ is Higgs ample.
\end{thm}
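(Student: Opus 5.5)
The plan is to verify the two conditions of Theorem \ref{thm3.3} (equivalently Corollary \ref{corl3.4}). The first condition is that $\det(E)$ is ample, which is exactly the hypothesis that $c_1(E)$ is ample. So the work is in the curve condition: find one $\delta>0$ such that $\mu^H_{\min}(f^*\mathcal{E})\geq \delta\,(C\cdot f^*h)$ for every morphism $f\,:\,C\longrightarrow X$ from a smooth projective curve.

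Fix the ample class $h$ and a morphism $f\,:\,C\longrightarrow X$. By curve semistability, $f^*\mathcal{E}$ is a slope semistable Higgs bundle on $C$. Its Harder--Narasimhan filtration is therefore trivial, and
$$\mu^H_{\min}(f^*\mathcal{E})\ =\ \mu(f^*\mathcal{E})\ =\ \frac{\deg f^*\det(E)}{r}.$$
This reduces everything to Case 1 in the proof of Theorem \ref{thm3.3}.

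Since $\det(E)$ is ample, \cite[Corollary 1.4.11]{L1} gives an $\epsilon>0$ with $\det(E)\cdot B\geq \epsilon\,(h\cdot B)$ for every irreducible curve $B\subseteq X$. Apply this to the image $B=f(C)$, taking into account the degree $d$ of $C\longrightarrow B$. By the projection formula, $\deg f^*\det(E)=d\,(\det(E)\cdot B)$ and $C\cdot f^*h=d\,(h\cdot B)$. Hence
$$\mu^H_{\min}(f^*\mathcal{E})\ \geq\ \frac{\epsilon}{r}\,(C\cdot f^*h).$$
If $f$ is constant, both sides are $0$ and the inequality is trivial. So $\delta=\epsilon/r$ works, and it is independent of $C$ and $f$.

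With both hypotheses verified, Theorem \ref{thm3.3} gives that $\mathcal{E}$ is Higgs ample. Its proof also supplies Higgs nefness via \cite[Lemma 3.3]{BBG19}, because $\mu^H_{\min}\geq 0$. The only delicate point is the uniformity of $\delta$, which comes from the Seshadri-type bound in \cite[Corollary 1.4.11]{L1}. Beyond that, I expect no genuine obstacle.
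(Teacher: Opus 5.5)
Your proposal is correct and follows the paper's proof essentially verbatim. Curve semistability gives $\mu^H_{\min}(f^*\mathcal{E})=\mu(f^*\mathcal{E})$, the uniform bound from \cite[Corollary 1.4.11]{L1} yields $\delta=\epsilon/r$ independent of $f$, and Theorem \ref{thm3.3} finishes. Your explicit treatment of the degree of $C\longrightarrow f(C)$ via the projection formula, and of constant maps, is more careful than the paper's displayed equality $\mu(f^*\mathcal{E})=\det(E)\cdot\widetilde{C}/r$, which is only correct if $\widetilde{C}$ is read as the pushforward cycle $f_*[C]$.
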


\begin{proof}
Let $C$ be any smooth curve, and let $f\,:\,C\,\longrightarrow\, X$ be any morphism. As $\det(E)$ is ample, by \cite[Corollary 1.4.11]{L1} there exists an $\epsilon \,>\, 0$ such that 
        $$\dfrac{\det(E)\cdot B}{B\cdot h}\ \geq\ \epsilon$$ for every irreducible curve $B\,\subseteq \,X$.
    Since $\mathcal{E}$ is a curve semistable Higgs vector bundle, we have $$\mu_{\min}^H\bigl(f^*\mathcal{E}\bigr)\ =\ \mu\bigl(f^*\mathcal{E}\bigr)\ =\ \dfrac{\det(E)\cdot \widetilde{C}}{r}\ \geq\ \dfrac{\epsilon}{r}\bigl(h\cdot \widetilde{C}\bigr)\ =\ \delta\bigl(C\cdot f^*h \bigr),$$
        where $\widetilde{C}$ is the image of $C$ in $X$ and $\delta \,=\, \dfrac{\epsilon}{r}.$
        Again, using Theorem \ref{thm3.3} we conclude that $\mathcal{E}$ is Higgs ample.
        \end{proof}

\begin{thm}
A Higgs vector bundle $\mathcal{E}$ over a smooth projective curve $X$ is Higgs ample if and only if it has a positive degree and every Higgs quotient bundle of $\mathcal{E}$ has a positive degree.
\end{thm}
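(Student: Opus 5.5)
The plan is to reduce to Corollary \ref{corl3.4} (equivalently Theorem \ref{thm3.3}) with $X$ a curve, taking $h$ to be the class of a point, so that $h$ has degree $1$. The only nontrivial work is uniformity over all maps $f\colon C\to X$, and on a curve this reduces to the identity map via degrees of finite covers.

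\emph{Forward direction.} Suppose $\mathcal{E}$ is Higgs ample.
\begin{enumerate}
\item By Definition \ref{defi2}, $\det(E)$ is ample on the curve $X$, so $\deg E>0$.
\item Apply Corollary \ref{corl3.4} to $f=\mathrm{id}_X$. This gives $\deg(\mathcal{Q})\geq \delta\,(X\cdot h)=\delta>0$ for every Higgs quotient $\mathcal{Q}$ of $\mathcal{E}$.
\item Here ``Higgs quotient bundle'' means a locally free $\theta$-invariant quotient. Any torsion-free quotient on a smooth curve is locally free, so no further care is needed.
\end{enumerate}

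\emph{Converse.} Assume $\deg E>0$ and every Higgs quotient bundle of $\mathcal{E}$ has positive degree.
\begin{enumerate}
\item Then $\det(E)$ is ample, since it is a line bundle of positive degree on a curve.
\item Every Higgs quotient is one of the finitely many Harder--Narasimhan-type quotients, or more simply $\mu_{\min}^H(\mathcal{E})$ is attained by the last HN graded piece. Hence $\mu^H_{\min}(\mathcal{E})>0$, and it lies in $\frac{1}{r!}\mathbb{Z}$.
\item Set $\delta=\mu^H_{\min}(\mathcal{E})$. The aim is to show $\mu^H_{\min}(f^*\mathcal{E})\geq \delta\deg f = \delta\,(C\cdot f^*h)$ for every nonconstant $f\colon C\to X$. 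For constant $f$ both sides vanish: $f^*\mathcal{E}$ is trivial with zero Higgs field, so $\mu^H_{\min}=0$.
\end{enumerate}

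The key claim, and the expected main obstacle, is that for a finite separable (characteristic $0$) morphism $f$ of curves,
$$\mu^H_{\min}(f^*\mathcal{E})=\deg(f)\,\mu^H_{\min}(\mathcal{E}).$$
This amounts to pullback of the Higgs HN filtration being the HN filtration, i.e.\ pullback of a semistable Higgs bundle under a finite map of curves remaining semistable. The standard argument is by uniqueness of the HN filtration.
\begin{enumerate}
\item First reduce to a Galois cover by passing to the Galois closure $C'\to C\to X$. Since degrees multiply, it suffices to treat $C'\to X$.
\item On a Galois cover, the maximal destabilizing $\theta$-invariant subsheaf of $f^*\mathcal{E}$ is unique. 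Note that $f^*\theta$ is compatible with the Galois action, since it is pulled back from $\Omega^1_X$.
\item Therefore this subsheaf is Galois-invariant and descends to a $\theta$-invariant subsheaf of $\mathcal{E}$, with slope divided by $\deg f$.
\item This contradicts the semistability of the HN pieces of $\mathcal{E}$ unless the HN filtrations correspond.
\end{enumerate}
Some care is needed because $f^*\theta$ lands in $f^*\Omega^1_X\subset \Omega^1_C$. A subsheaf invariant under the $\Omega^1_C$-valued field, which is the one used in the definition, is invariant for the $f^*\Omega^1_X$-valued one after saturation, and vice versa. I would check this compatibility first.

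Granting the claim, $\mu^H_{\min}(f^*\mathcal{E})\geq \delta\deg f$ holds with $\delta$ independent of $C$ and $f$. Theorem \ref{thm3.3} then gives Higgs ampleness.

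A fallback that avoids descent is to use only the inequality $\mu^H_{\min}(f^*\mathcal{E})\geq \deg(f)\,\mu^H_{\min}(\mathcal{E})$. One would prove it by pushing forward: given a Higgs quotient $f^*\mathcal{E}\to\mathcal{Q}$, adjunction yields a Higgs map $\mathcal{E}\to f_*\mathcal{Q}$. One then compares slopes using $\deg f_*\mathcal{Q}=\deg\mathcal{Q}+\rank(\mathcal{Q})\deg(f_*\mathcal{O}_C)$. However, $\deg f_*\mathcal{O}_C\le 0$, so this bound goes the wrong way. For that reason I would rely on the Galois descent argument above.
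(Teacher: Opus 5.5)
Your argument is correct, but it is organized differently from the paper's. For the converse, the paper splits into cases. If $\mathcal{E}$ is semistable, it cites \cite[Lemma 3.3]{BR06} for curve semistability and applies Theorem \ref{thm3.7}. Otherwise it argues, by contradiction, that every Harder--Narasimhan graded piece is semistable of slope at least $\mu(\gr^{\HN}_l(\mathcal{E}))>0$, hence Higgs ample by the first case. It then builds $\mathcal{E}$ up from these pieces by iterating the extension result, Corollary \ref{thm3.6}. For the forward direction it uses Proposition \ref{thm3.2} rather than Corollary \ref{corl3.4}.

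You instead prove $\mu^H_{\min}(f^*\mathcal{E})\geq \deg(f)\,\mu^H_{\min}(\mathcal{E})$ for every finite $f$. You then apply Theorem \ref{thm3.3} once, with $h$ the point class and the explicit constant $\delta=\mu^H_{\min}(\mathcal{E})$. This bypasses Theorem \ref{thm3.7} and Corollary \ref{thm3.6} altogether. Your Galois-descent argument also proves, rather than cites, the one real input both routes share: finite pullback between curves in characteristic $0$ preserves Higgs semistability. You further treat two points the paper passes over silently. One is constant maps. The other is the comparison of $\Omega^1_C$-valued with $f^*\Omega^1_X$-valued invariance. These agree for saturated subsheaves, because $f^*\Omega^1_X\hookrightarrow\Omega^1_C$ is an inclusion of line bundles when $X$ is a curve.

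Some tidying is needed.
\begin{itemize}
\item Drop the claim that every Higgs quotient is one of finitely many HN-type quotients. It is false, and all you use is that the last HN quotient realizes $\mu^H_{\min}$.
\item State the easy inequality $\mu^H_{\min}(g^*\mathcal{F})\leq\deg(g)\,\mu^H_{\min}(\mathcal{F})$ for $g\colon C'\to C$, obtained by pulling back a minimal-slope quotient. This is what actually justifies reducing to the Galois closure.
\item Replace the vague ``contradiction unless the filtrations correspond'' with a direct descent. Writing $\pi\colon C'\to X$ for the Galois composite, the last HN quotient $\mathcal{Q}'$ of $\pi^*\mathcal{E}$ is canonical, hence Galois-invariant.
\item Its kernel descends at the generic point and then by saturation. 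Saturated subsheaves are determined by their generic fibre, so ramification causes no trouble. This gives a Higgs quotient $\mathcal{Q}$ of $\mathcal{E}$ with $\pi^*\mathcal{Q}=\mathcal{Q}'$.
\item Therefore $\mu^H_{\min}(\pi^*\mathcal{E})=\deg(\pi)\,\mu(\mathcal{Q})\geq\deg(\pi)\,\mu^H_{\min}(\mathcal{E})$, which is exactly the inequality you need.
\end{itemize}
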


\begin{proof}
    Suppose that the Higgs bundle $\mathcal{E} \,=\, (E,\,\theta)$ over the smooth projective curve $X$ is Higgs ample. Then by definition, the line bundle $\det(E)$ is ample, and thus $\deg(E) \,=\, \deg(\det(E)) \,>\, 0.$ Similarly, if $\mathcal{E}\,\longrightarrow \,\mathcal{F} \,\longrightarrow\, 0$ is a Higgs quotient of $\mathcal{E}$, then $\mathcal{F}$ is also Higgs ample by Proposition \ref{thm3.2}. Therefore, we have $\deg(\mathcal{F}) \,=\, \deg(\det(F))\, > \,0,$ where $F$ is the underlying vector bundle of the Higgs bundle $\mathcal{F}.$

    Conversely, suppose that $\mathcal{E} \,=\, (E,\,\theta)$ is a Higgs bundle such that $\deg(E)\,>\,0$ and all of its Higgs quotient bundles have positive degree. Note that $\det(E)$ is always ample as $\deg(\det(E))\,=\,\deg(E) \,>\, 0.$ We consider the following two cases : 
    \begin{enumerate}
        \item {\bf Case 1 :}\  If $\mathcal{E}$ is slope Higgs semistable, then $\mathcal{E}$ is curve semistable by \cite[Lemma 3.3]{BR06}. Thus by Corollary \ref{thm3.7} we have $\mathcal{E}$ to be Higgs ample.

        \item {\bf Case 2 :}\ Suppose that $\mathcal{E}$ is not slope Higgs semistable, and let
         \begin{align*}
  0 \,= \,\HN_0(\mathcal{E}) \,\subsetneq\, \HN_1(\mathcal{E})\,\subsetneq \,\HN_2(\mathcal{E})\, \subsetneq\, \cdots\,\subsetneq\, \HN_{l-1}(\mathcal{E}) \,\subsetneq \,\HN_l(\mathcal{E})\,= \,\mathcal{E}
 \end{align*}
 be the Harder-Narasimhan filtration of $\mathcal{E}.$ Note that, there must be some index $i\,\in\,\{1,\,2,\,\cdots,\,l\}$ such that  $\HN_{i}(\mathcal{E})/ \HN_{i-1}(\mathcal{E})$ is not Higgs ample. Indeed, otherwise by Corollary \ref{thm3.6} we have $\mathcal{E}$ to be Higgs ample. Let $ \HN_{k}(\mathcal{E})/ \HN_{k-1}(\mathcal{E})$ be not Higgs ample. Then $ \HN_{k}(\mathcal{E})/ \HN_{k-1}(\mathcal{E})$ must have non-positive degree. Otherwise, by a similar argument as in Case 1, we have $ \HN_{k}(\mathcal{E})/ \HN_{k-1}(\mathcal{E})$ to be Higgs ample. By the property of the Harder-Narasimhan filtration, we have
 $$0\ \geq\ \mu(\HN_{k}(\mathcal{E})/ \HN_{k-1}(\mathcal{E})) \ \geq\ \mu(\HN_{l}(\mathcal{E})/ \HN_{l-1}(\mathcal{E})).$$ This gives a contradiction to the given hypothesis.
 \end{enumerate}
 Combining both the cases, we have the result.
\end{proof}

\section{Acknowledgement}
The authors are thankful to Krishna Hanumanthu for fruitful discussions.


\begin{thebibliography}{********}

\bibitem[BBG19]{BBG19}Indranil Biswas, Ugo Bruzzo and Sudarshan Gurjar, 
\emph{Higgs bundles and fundamental group schemes,}  Advances in Geometry, vol. 19, no. 3, pp. 381-388 (2019).

\bibitem[BCO25]{BCO25} Ugo Bruzzo,  Armando Capasso and  Beatriz Gra\~{n}a Otero, 
\emph{Positivity for Higgs Vector Bundles: Criteria and Applications,} Revista Matem\'{a}tica Complutense (2025), DOI : https://doi.org/10.1007/s13163-025-00551-7

\bibitem[BR06]{BR06} Ugo Bruzzo and D. Hern\'{a}ndez Ruip\'{e}rez,
\emph{Semistability vs. nefness for (Higgs) vector bundles,}
Differential Geometry and its Applications, Volume 24, Issue 4,  Pages 403-416, (2026).

\bibitem[CP91]{CP91}  Fr\'{e}d\'{e}ric Campana and Thomas Peternell,
\emph{Projective manifolds whose tangent bundles are numerically effective,}
Math Annalen, 289, pp 169-187, (1991).

\bibitem[H66]{H66} Robin Hartshorne,
\emph{Ample Vector Bundles,}
Publ. Math. IHES, 29, pp 63-94 (1966).

\bibitem[HL10]{HL10} D. Huybrechts and M. Lehn,
\emph{The Geometry of Moduli Spaces of Sheaves},
Second Edition, 2010. Cambridge University Press.

\bibitem[L04a]{L1} Robert Lazarsfeld,
\emph{Positivity in Algebraic Geometry,} Volume I, Springer, (2004).

\bibitem[L04b]{L2} Robert Lazarsfeld,
\emph{Positivity in Algebraic Geometry,} Volume II, Springer, (2004).

\end{thebibliography}
\end{document}